\newif\ifpdf
\numberwithin{equation}{section}       
\theoremstyle{plain}
\newtheorem{Thm}{Theorem}[section]
\newtheorem{Prop}[Thm]{Proposition}
\newtheorem{Lem}[Thm]{Lemma}
\newtheorem{Prop-def}[Thm]{Proposition-Definition}
\newtheorem{theoalph}{Theorem}
\newtheorem{coralph}{Corollary}
\theoremstyle{definition}
\newcommand{\A}{{\mathbb{A}}}
\newcommand{\N}{{\mathbb{N}}}
\newcommand{\Q}{{\mathbb{Q}}}
\newcommand{\R}{{\mathbb{R}}}
\newcommand{\Z}{{\mathbb{Z}}}
\newcommand{\ord}{\mathrm{ord}}
\newcommand{\fm}{{\mathfrak{m}}}
\newcommand{\fB}{{\mathfrak{B}}}
\newcommand{\fX}{\mathfrak{X}}
\newcommand{\cO}{{\mathcal{O}}}
\newcommand{\cU}{{\mathcal{U}}}
\renewcommand{\div}{\mathrm{div}}
\newcommand{\spec}{\mathrm{Spec}\,}
\begin{document}
%
%

\setcounter{tocdepth}{1}

\title{Countability properties of  some Berkovich spaces}

\date{\today}

\author{Charles Favre}

\thanks{Supported by the ANR project Berko, and by the ECOS project C07E01}

\address{CNRS, \'Ecole Polytechnique, Palaiseau, 91128 Cedex France }
\email{favre@math.polytechnique.fr}

\subjclass[2000]{Primary: 37F10, Secondary: 11S85, 37E25}

\maketitle

\tableofcontents

%
%
\section{Introduction}

Our aim is to prove some facts on the topology of a particular class
of non archi\-medean analytic spaces in the sense of Berkovich. One of the main
feature of Berkovich analytic spaces  is that their natural
topology makes them both locally arcwise connected and locally compact. However, in general
these spaces are far from being separable\footnote{a {\it separable} space is a topological space admitting a countable dense subset. We shall never use the notion of separable field extension so that no confusion should occur.}. Nevertheless, we shall prove
that analytic spaces retain some countability properties that reflect their algebraic nature. We expect these properties to be useful in a dynamical context.

\medskip

We shall work in the following setting.
We consider an  normal algebraic variety\footnote{this means $X$ is irreducible reduced of finite type and $k$ is algebraically closed. The last assumption is however inessential in our paper.} $X$ defined over a field $k$.
We fix an effective Cartier divisor $D\subset X$, and
we let $\hat{X}$ be the formal completion of $X$
along $D$. We then consider the generic fibre
$X_\eta$ of this formal scheme as defined in~\cite{thuillier}: this is an analytic space in the sense of Berkovich over $k$ endowed with the trivial norm. 
When $X = \spec A$ is affine, and $D =\{ f=0 \}$ with $f \in A$, then $X_\eta$ coincides with the set of bounded (hence $\le 1$) multiplicative semi-norms $|\cdot| : A \to \R_+$ such that $0< |f| <1$. Note that replacing $|\cdot|$ by $\nu := -\log|\cdot|$, we can define in an equivalent way $X_\eta$ as the set of  valuations  $\nu : A \to \R_+\cup \{+\infty \}$ (possibly taking the value $+ \infty$ on a non-zero element) such that $\infty>\nu(f)>0$.  In the general case, $X$ is covered by affine open subsets $U_i$, and  $X_\eta$ is obtained by patching together the $U_{i,\eta}$'s in a natural way.

Multiplying a valuation by a positive constant yields an action of $\R^*_+$ on $X_\eta$. It is therefore natural to introduce the \emph{normalized generic fiber} $ ]X [\, \subset X_\eta$ of valuations normalized
by the condition $\nu(f) =+1$. In this way,   $]X[$ can be identified with the quotient space of $X_\eta$ by 
$\R^*_+$, and  the projection map $X_\eta \to ]X[$ is a $\R^*_+$-fibration. When $D$ is complete, then the space $]X[$ is compact, see Lemma~\ref{lem:compact} below.

Our main result reads as follows.
\begin{theoalph}\label{thm:dense}
Let $X$ be a normal algebraic variety over $k$ and
$D$ be an effective  Cartier divisor.  Let $A$ be any subset of the normalized generic fiber $]X[$ of the  formal completion of $X$ along $D$. 

For any $x$ in the closure of $A$, there exists a \emph{sequence} of points $x_n \in A$ such that $x_n \to x$.
\end{theoalph}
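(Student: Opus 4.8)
The plan is to reduce to an affine chart and then to produce the required sequence by a diagonal argument, once one understands how the value $\nu(g)$ of an arbitrary function depends on $\nu$ near the target point. Write the given subset as $A$ and the target point as $x=\nu_0\in\overline A$. Since $X$ is covered by affine opens and the normalized generic fibre $]U[$ of an affine open $U\subseteq X$ is open in $]X[$, I would first pick an affine chart $U=\spec R$ around the center of $\nu_0$ on $X$, with $R$ a finitely generated $k$-algebra and $D\cap U=\{f=0\}$. Because $]U[$ is open and $\nu_0\in\overline A$, one has $\nu_0\in\overline{A\cap\,]U[}$ computed inside $]U[$, and a sequence converging to $\nu_0$ in $]U[$ converges in $]X[$. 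So it suffices to treat $X=\spec R$ affine, where a point is a valuation $\nu\colon R\to\R_+\cup\{+\infty\}$, trivial on $k$, with $\nu(f)=1$, and the topology is the coarsest making every $\nu\mapsto\nu(g)$, $g\in R$, continuous.

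The first key observation is that the ``monomial part'' of this topology is already finite-dimensional. Fixing a closed embedding $R=k[z_1,\dots,z_N]/I$, every $g$ has a monomial minorant $\min_\alpha\nu(z^\alpha)$, taken over the finitely many monomials $z^\alpha$ occurring in $g$, and $\nu(g)\ge\min_\alpha\nu(z^\alpha)$ with a nonnegative gap $D_g(\nu):=\nu(g)-\min_\alpha\nu(z^\alpha)$. Since $\min_\alpha\nu(z^\alpha)$ is a minimum of the finitely many linear expressions $\sum_i\alpha_i\,\nu(z_i)$, it depends continuously on the single vector $(\nu(z_1),\dots,\nu(z_N))$. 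Consequently, controlling the minorant along a sequence reduces to the $N$ conditions $\nu_n(z_i)\to\nu_0(z_i)$, which are detected by finitely many test functions.

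The whole difficulty is concentrated in the gap $D_g$, i.e. in those $g$ whose lowest-order monomials cancel, equivalently whose initial form vanishes in the graded ring $\gr_{\nu_0}R$. Here the algebraic nature of the space is decisive: by the Abhyankar inequality $\ratrk(\nu_0)+\trdeg\big(k(\nu_0)/k\big)\le\dim X$, the valuation $\nu_0$ carries only finitely much transcendence in its value group and residue field. I would turn this into an induction on $\dim X$: after a coordinate change monomializing $\nu_0$ along its finitely many rationally independent directions, the residual non-monomial behaviour is recorded by a residue field of strictly smaller transcendence degree, hence by a valuation datum of strictly smaller dimension, to which the inductive hypothesis applies. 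Iterating produces a \emph{countable} family $(g_j)_{j\ge1}\subseteq R$, depending on $\nu_0$ and on $A$, such that for sequences $\nu_n\in A$ the conditions $\nu_n(g_j)\to\nu_0(g_j)$ for all $j$ force $\nu_n(g)\to\nu_0(g)$ for every $g\in R$.

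Granting such a countable determining family, the theorem follows by diagonalization: since $\nu_0\in\overline A$, for each $n$ there is $\nu_n\in A$ with $|\nu_n(g_j)-\nu_0(g_j)|<1/n$ for all $j\le n$; then $\nu_n(g_j)\to\nu_0(g_j)$ for every $j$, hence $\nu_n\to\nu_0$. I expect the main obstacle to be precisely the construction of this countable family in the presence of uncountably many ``tangent directions'': already for the monomial valuation of weights $(1,1)$ on $\A^2$ the functions $z_1-\lambda z_2$ ($\lambda\in k$) show, for uncountable $k$, that no countable family generates a neighbourhood basis, so the space is typically not first countable. The point to be made rigorous is that, although detecting the full topology at $\nu_0$ requires uncountably many functions, detecting convergence \emph{from within the fixed set $A$} does not: the exceptional directions of the approximating valuations can be made to escape, and the finite-rank, finite-transcendence structure of $\nu_0$ bounds how much non-monomial data a converging sequence must match.
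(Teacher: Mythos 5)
Your reduction to an affine chart and your final diagonal argument are both fine, and your diagnosis is accurate: everything hinges on producing a countable family $(g_j)$ that detects convergence to $\nu_0$ \emph{from within} $A$. But that lemma is essentially the whole theorem, and the argument you sketch for it does not work. First, a point of $]X[$ is a rank-one real-valued valuation that need not be Abhyankar: for an infinitely singular valuation on $\A^2$ one has $\ratrk(\nu_0)=1$ and $\trdeg=0$, Abhyankar's inequality is strict, and no finite ``coordinate change monomializing $\nu_0$'' exists --- one needs an infinite sequence of augmentations or blow-ups, and your proposed induction has no strictly decreasing invariant in this case (the residual datum after one step carries the same rank and transcendence budget). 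Second, over a field of positive characteristic even local uniformization, the weakest form of the monomialization you invoke, is an open problem, while the theorem is claimed over arbitrary $k$. Third, and most seriously, the set $A$ never actually enters your construction: the sentence ``the exceptional directions of the approximating valuations can be made to escape'' is the statement to be proved, not a mechanism, and a family $(g_j)$ built from $\nu_0$ alone cannot suffice, as your own example $z_1-\lambda z_2$ over uncountable $k$ shows.

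For contrast, the paper never constructs functions detecting convergence; it constructs the sequence directly, and in a different ambient space. It lifts $x$ to a valuation $\nu$ in the Riemann--Zariski space $\fX$ of a projective compactification via the continuous surjection $\Pi$ of Proposition~\ref{lem:projec}, and proves the sequential density there (Theorem~\ref{thm:RZ-dense}). Abhyankar's inequality is used only to make the value group countable (Lemma~\ref{lem:countable}), which yields a countable tower of blow-ups $Y_n$ principalizing all valuation ideals of $\nu$ (Lemma~\ref{lem:blw}). The dependence on $A$ is then handled by noetherianity: at each level the closure of the centers of valuations in $A$ has finitely many irreducible components, and a compactness argument over the resulting tree of components (Lemma~\ref{lem:interm-constr}) produces irreducible $Z_n\supset C(\nu,Y_n)$ together with a \emph{countable} $A'\subset A$ whose centers are dense in each $Z_n$. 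One then enumerates the pro-divisors defined over a countable field of definition and, using Lemma~\ref{lem:exploit}, selects $\tilde\nu_l\in A'$ whose centers eventually avoid each fixed pro-divisor. Convergence follows from the valuation-ideal Lemma~\ref{lem:val-ideal}: each $f\in R_\nu$ becomes regular at the generic point of $C(\nu,Y_N)$ for suitable $N$, its polar locus cuts $Z_N$ in a pro-divisor, and the chosen sequence eventually misses it. Your outline would need a substitute for this entire enumeration mechanism; as written, the key step is asserted rather than proved, and the route sketched for it fails already for non-Abhyankar valuations in characteristic zero.
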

Recall that a topological space is \emph{angelic} if any relatively $\omega$-compact set\footnote{i.e. any sequence of points in this set has a cluster point in the ambient space} is relatively compact, and for any subset $A$, any point in $\bar{A}$ is the limit of a \emph{sequence} of 
points in $A$, see~\cite{floret} for more informations. This property plays an important role
in the study of Banach spaces. A first consequence of the previous result is the following
\begin{coralph}
Let $X$ be a normal algebraic variety over $k$ and $D$ be an effective  Cartier divisor. 
Then $]X[$ is angelic. If moreover $D$ is complete then $]X[$ is sequentially compact.
\end{coralph}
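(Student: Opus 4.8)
The plan is to unwind angelicity into its two defining conditions and check each separately. These are (i) every relatively $\omega$-compact subset of $]X[$ is relatively compact, and (ii) for every subset $A$ and every $x\in\bar A$ some sequence in $A$ converges to $x$. Condition (ii) is exactly the assertion of Theorem~\ref{thm:dense}, so it requires no further work; the entire difficulty lies in (i) and in the final sequential compactness statement.

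I would first settle the \emph{complete} case, which simultaneously proves the second sentence of the corollary. If $D$ is complete then $]X[$ is compact by Lemma~\ref{lem:compact}, so every subset has compact closure and (i) is immediate; together with (ii) this makes $]X[$ angelic. For sequential compactness I use the standard fact that a Fr\'echet--Urysohn countably compact space is sequentially compact: given a sequence, either some value recurs infinitely often, or (the space being $T_1$ and countably compact) its infinite set of values admits an $\omega$-accumulation point $x$, and (ii) then extracts a subsequence converging to $x$, the indices being made strictly increasing because in the Hausdorff space $]X[$ a sequence converging to $x$ with infinitely many distinct terms already suffices. Since a compact space is countably compact, $]X[$ is sequentially compact.

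For (i) with \emph{arbitrary} $D$ I would reduce to the complete case by compactification. Pick a proper normal variety $\bar X\supseteq X$ having $X$ as a dense open subset and let $\bar D$ be the closure of $D$; properness makes $\bar D$ complete, so $]\bar X[$ is compact by Lemma~\ref{lem:compact}, while Theorem~\ref{thm:dense} applied to $\bar X$ furnishes the Fr\'echet--Urysohn property of $]\bar X[$, and one realizes $]X[$ as a subspace of $]\bar X[$. Now let $A\subseteq]X[$ be relatively $\omega$-compact in $]X[$. Its closure $\bar A^{\,]\bar X[}$ is closed in the compact space $]\bar X[$, hence compact, and by the Fr\'echet--Urysohn property each of its points is a limit $x=\lim x_n$ with $x_n\in A$. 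By relative $\omega$-compactness \emph{in} $]X[$ the sequence $(x_n)$ has a cluster point in $]X[$, whereas in the Hausdorff space $]\bar X[$ a convergent sequence has its limit as sole cluster point; therefore $x\in]X[$. Thus $\bar A^{\,]\bar X[}\subseteq]X[$, so $\bar A^{\,]X[}=\bar A^{\,]\bar X[}$ is compact and $A$ is relatively compact, establishing (i).

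The main obstacle is the compactification step: one must exhibit $]X[$ as a genuine \emph{topological subspace} of a space of the same type built over a \emph{complete} divisor, which requires verifying that passing to the closure $\bar D$ (and, if needed, making it Cartier by a blow-up, or extending the construction of $]\,\cdot\,[$ to it) leaves the subspace topology on $]X[$ undisturbed. I stress that this geometric input cannot be replaced by soft topology alone: the ordinal space $[0,\omega_1)$ is locally compact, Hausdorff and Fr\'echet--Urysohn yet countably compact without being compact, so local compactness of $]X[$ together with (ii) does not by itself yield (i); it is precisely the embedding into a compact ambient space that excludes such behaviour.
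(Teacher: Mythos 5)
Your proposal is correct and follows essentially the same route as the paper: condition (ii) is exactly Theorem~A, and condition (i) is obtained precisely as in the paper by taking a Nagata compactification $\bar X$ of $X$ (with $\bar D$ made Cartier by a normalized blow-up), viewing $]X[$ inside the compact space $]\bar X[$ of Lemma~\ref{lem:compact}, and combining Theorem~A in $]\bar X[$ with relative $\omega$-compactness and uniqueness of sequential limits in a Hausdorff space to force the closure of $A$ to remain in $]X[$. The only difference is cosmetic: you spell out the standard deduction that a compact (hence countably compact) Fr\'echet--Urysohn space is sequentially compact, a step the paper leaves implicit when passing from angelicity to sequential compactness.
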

In order to state yet another consequence of Theorem~A, we need to introduce some terminology.
When $X$ is affine, a divisorial valuation on the ring of regular functions $k[X]$
is a discrete valuation of rank $1$ and transcendence degree $\dim(X)-1$. Geometrically, it is given by the order of vanishing along a prime divisor in a suitable birational model of $X$.
A point  $x\in X_\eta$ is called divisorial if it is given by a divisorial valuation in the ring of regular functions of some affine chart. If we
interpret $x$ as a semi-norm, then $x$ is divisorial iff it is a norm, and 
the residue field of the completion of $k[X]$ w.r.t. the induced norm by $x$
 has transcendence degree $\dim(X)-1$ over $k$.

Finally recall that we have a natural reduction map $r_X :  X_\eta \to D$ defined in the affine case
by sending  a semi-norm $|\cdot|$  to the prime ideal $\{ |\cdot | <1\}\subset k[X]$. 
This reduction map sends a  divisorial  valuation  to its center in $X$ when it is non empty. Note that the center is automatically included in $D$  if the valuation lies in $X_\eta$. 
 
We shall also obtain
\begin{coralph}
Let $X$ be a normal algebraic variety over $k$ and $D$ be an effective  Cartier divisor. 
Then for any point $x\in X_\eta$, there exists a \emph{sequence} of divisorial points $x_n \in X_\eta$
that converges to $x$, and such that $r_X(x_n) \in\overline{r_X(x)}$ for all $n$.
\end{coralph}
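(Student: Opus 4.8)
\emph{Overall strategy.} The plan is to peel off two soft reductions and isolate the real content in a single density statement, which is then upgraded from a topological closure to a genuine sequence by a direct appeal to Theorem~A. First I would dispose of the $\R^*_+$-action. The scaling $\nu\mapsto t\nu$ ($t\in\R^*_+$) acts by homeomorphisms of $X_\eta$; it preserves the divisorial points, since divisoriality is read off from the residue field of the completion of $k[X]$, which does not change when $|\cdot|$ is replaced by $|\cdot|^t$; and it commutes with $r_X$, because the prime $\{\,|\cdot|<1\,\}$ is unaffected by this substitution. Given $x\in X_\eta$, I set $t=\nu_x(f)^{-1}$ so that $tx\in\,]X[$; once I produce divisorial $y_n\in\,]X[$ with $y_n\to tx$ and $r_X(y_n)\in\overline{r_X(tx)}=\overline{r_X(x)}$, the points $x_n:=t^{-1}y_n$ settle the corollary. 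Hence I may assume from now on that $x\in\,]X[$.

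\emph{Reduction to density.} Put $Z:=\overline{r_X(x)}\subset D$ and
\[
A:=\{\,y\in\,]X[\ :\ y\ \text{divisorial and}\ r_X(y)\in Z\,\}.
\]
Applying Theorem~A to the subset $A$ of $]X[$, every point of $\overline A$ is the limit of a \emph{sequence} of points of $A$. So the corollary reduces to the purely topological assertion that $x\in\overline A$, i.e. that divisorial points whose centre specialises $r_X(x)$ accumulate at $x$.

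\emph{Density with prescribed centre.} To prove $x\in\overline A$ I fix a basic neighbourhood $V$ of $x$, which in an affine chart $\spec R$ with $D=\{f=0\}$ has the shape $V=\{\,y:\ a_i<y(g_i)<b_i,\ 1\le i\le m\,\}$ with $g_i\in R$. I choose a log-resolution $\pi:X'\to X$ ($X'$ normal, proper birational over $X$) so that the pullback to $X'$ of $D$, of the divisors $\{g_i=0\}$, and of the exceptional locus is a simple normal crossings divisor $E=\sum_j E_j$ near the centre $\xi'$ of $x$ on $X'$, with $\pi(\xi')=r_X(x)$. Following Thuillier, $E$ carries a skeleton $\Delta_{X'}\subset X_\eta$ with a retraction $p_{X'}:X_\eta\to\Delta_{X'}$, and these retractions converge to the identity; refining $\pi$ I may assume $p_{X'}(x)\in V$. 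On $\Delta_{X'}$ a point is the monomial valuation $\val_w$ attached to a system of positive weights $w_j$ on the components $E_j$ through the stratum, and $\val_w$ is divisorial exactly when the $w_j$ are pairwise commensurable; such points are dense in $\Delta_{X'}$. Picking a divisorial point $y$ of $\Delta_{X'}$ close to $p_{X'}(x)$ and in the same open face gives $y\in V$; because that face corresponds to the stratum through $\xi'$, the centre of $y$ on $X$ lies in $\pi(\overline{\{\xi'\}})=\overline{\{r_X(x)\}}=Z$, and since $\val_w(f)>0$ I may normalise by $y(f)^{-1}$ without altering divisoriality or the centre. Thus $y\in A\cap V$, whence $x\in\overline A$.

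\emph{Main obstacle.} The bookkeeping in Steps~1--2 is immediate; the crux is Step~3, and inside it two points deserve care. The first is the convergence $p_{X'}(x)\to x$ as the model is refined, which I would draw from Thuillier's theory of skeleta and retractions for formal models over a trivially valued field. The second, and genuinely delicate, point is keeping the approximating divisorial valuation in the open face meeting the correct stratum: this is precisely what guarantees that its centre \emph{specialises} $r_X(x)$ and stays inside $Z$, rather than drifting onto a larger stratum where the centre would leave $Z$. Once the centre is controlled in the affine chart, globalising to all of $X$ is routine, since $r_X$ and the divisorial property are defined chart by chart.
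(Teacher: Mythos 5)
Your Steps 1--2 are sound and coincide with the skeleton of the paper's argument (normalize by the $\R^*_+$-action, then apply Theorem~A to a suitable set $A$ of divisorial points), but Step~3 --- which you yourself flag as the crux --- contains a genuine error, and in precisely the ``delicate'' place you identify. The centre on $X'$ of a monomial valuation lying in the open face attached to a stratum $\sigma$ is the \emph{generic point} $\eta_\sigma$ of that stratum, and the centre $\xi'$ of $x$ satisfies $\xi'\in\overline{\{\eta_\sigma\}}$, i.e.\ $\eta_\sigma$ is a \emph{generization} of $\xi'$. Applying $\pi$ therefore yields $r_X(x)\in\overline{\{r_X(y)\}}$, which is the reverse of the inclusion $r_X(y)\in\overline{\{r_X(x)\}}$ that you need; your assertion that the centre of $y$ lies in $\pi(\overline{\{\xi'\}})$ is false in general. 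Concretely, take $X=\A^2$, $D$ a line, and $x$ the normalized divisorial valuation $\ord_E$, where $E$ is the exceptional divisor of the blow-up of a closed point $p\in D$, so that $\overline{\{r_X(x)\}}=\{p\}$. Here $D$ is already simple normal crossings, the stratum through $\xi'=p$ is $D$ itself, and every point of the corresponding face is a multiple of $\ord_D$, whose centre is the generic point of $D$ --- not in $\{p\}$. So on a fixed model the stratum bookkeeping proves nothing; you would have to refine the model until the closure of the relevant stratum maps into $Z$, and you give no argument for that. The missing idea, which is exactly the one-line trick opening the paper's proof, is the anticontinuity of the reduction map: for $C\subset D$ closed, $r_X^{-1}(C)$ is \emph{open}, so $r_X^{-1}(\overline{\{r_X(x)\}})$ is an open neighbourhood of $x$. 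Granting this, \emph{plain} density of divisorial points suffices: every divisorial point close enough to $x$ automatically has its centre in $\overline{\{r_X(x)\}}$, and the entire ``density with prescribed centre'' problem evaporates (it would also repair your Step~3, since $p_{X'}(x)\to x$ eventually forces the approximants into this open set).

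There is a second, structural problem: your route through log resolutions and Thuillier's skeleta, with retractions converging to the identity, invokes resolution of singularities, hence is only available in characteristic zero (or in low dimension), whereas the corollary is stated for a normal variety over an arbitrary field $k$. The paper's proof avoids resolution altogether: it covers $X$ by affine charts $X_i$ with projective compactifications $\bar{X}_i$, uses the elementary fact that divisorial valuations are dense among the valuations of the Riemann--Zariski space centered in $\bar{D}_i$ (only normalized blow-ups are needed), and transports this density to $]\bar{X}_i[$, hence to $]X[$ and $X_\eta$, via the continuous surjection $\Pi:\mathfrak{Y}(E)\to\,]Y[$ of Proposition~4.1, which sends divisorial valuations to divisorial points. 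Theorem~A then upgrades membership in the closure to an actual converging sequence, as in your Step~2. A further wrinkle your picture would have to accommodate, and which the surjectivity of $\Pi$ handles for free, is that a point $x\in X_\eta$ may be a semi-norm with nontrivial kernel, to which the monomial-approximation formalism on a fixed SNC model does not directly apply.
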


Theorem~\ref{thm:dense} will be deduced from its analog on Riemann-Zariski spaces, see 
Theorem~\ref{thm:RZ-dense} below for details.
In this case, it essentially boils down to the noetherianity of the Zariski topology on schemes.

\medskip

Let us now explain how one can transfer the previous results to non-archimedean analytic spaces. The normalized generic fiber $]X[$ is not a Berkovich analytic space in a canonical way. However  in the special situation where we have a map $T : X \to \A^1_k$ such that $D = \{ T =0 \}$, then  $]X[$  turns out to be an analytic space over the non-archimedean field $k((T))$ (with the norm $\exp(-\ord_0)$).  In the case $X = \spec (k[x_i]/\mathfrak{a})$ is affine, then $\mathcal{A}= \varprojlim_n (k[x_i]/\mathfrak{a})/(T^n)$
is a $k[[T]]$-algebra topologically of finite type, and $]X[$ is the set of semi-norms on $\mathcal{A}$ whose restriction on
$k[[T]]$ is  $\exp(-\ord_0)$. In general, $]X[$ coincides with the generic fiber of the $T$-adic completion of $f$ in the sense of Berkovich (its construction in rigid geometry was previously given by Raynaud).


We shall prove:
\begin{coralph}\label{cor:angelic}
Any compact Berkovich analytic space that is defined over the field $k((T))$ is angelic. It is
in particular sequentially compact, and divisorial points are sequentially dense.
\end{coralph}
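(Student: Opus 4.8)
The plan is to reduce Corollary~D to Theorem~A. The essential observation, already spelled out in the excerpt, is that a compact Berkovich analytic space $Z$ over $k((T))$ can be realized as a normalized generic fiber. More precisely, I would first establish the following structural fact: any such $Z$ arises (up to the identification described before the statement) as $]X[$ for a suitable normal algebraic variety $X$ over $k$ equipped with a map $T : X \to \A^1_k$ with $D = \{T = 0\}$ complete. In the affine case this is transparent from the description $\mathcal{A} = \varprojlim_n (k[x_i]/\mathfrak{a})/(T^n)$, since the analytic space is the generic fiber of a $T$-adic completion, and by Raynaud's theory every such object is the formal completion of an algebraic (or at least formal) model along $\{T=0\}$. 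The only subtlety at this stage is global: one must cover $Z$ by such affine pieces and glue the corresponding algebraic charts into a single normal variety $X$, using that $Z$ is compact to get a finite cover.

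Once $Z \cong\, ]X[$ is in hand, the three assertions follow directly. For \emph{angelicity}, I would invoke Theorem~A verbatim: it guarantees that for any subset $A \subseteq\, ]X[$ and any $x$ in its closure, there is a sequence $x_n \in A$ with $x_n \to x$, which is exactly the Fréchet--Urysohn condition appearing in the definition of angelic spaces recalled after Theorem~A. The relatively countably compact implies relatively compact clause of angelicity then comes for free from compactness of $]X[$ (Lemma~\ref{lem:compact}, using that $D$ is complete), since in a compact space every subset is relatively compact. For \emph{sequential compactness}, take any sequence $(z_n)$ in $Z$; by compactness it has a cluster point $x$, hence $x$ lies in the closure of $\{z_n\}$, and Theorem~A extracts a genuinely convergent subsequence --- this is precisely the content of the second Corollary in the excerpt specialized to the complete case.

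For the \emph{sequential density of divisorial points}, I would appeal to the third Corollary. Given an arbitrary $x \in\, ]X[\, \subset X_\eta$, that result produces a sequence of divisorial points $x_n \in X_\eta$ converging to $x$. The one point requiring care is that I need the approximating divisorial points to lie in the normalized fiber $]X[$ (equivalently, in $Z$) rather than merely in $X_\eta$: since $]X[$ is the quotient of $X_\eta$ by the $\R^*_+$-action and divisoriality is invariant under scaling a valuation, I can normalize each $x_n$ by the condition $\nu(T) = 1$ without destroying convergence, because the projection $X_\eta \to\, ]X[$ is continuous along the $\R^*_+$-fibration. Combined with the constraint $r_X(x_n) \in \overline{r_X(x)}$, this yields divisorial points of $Z$ accumulating at $x$, establishing sequential density.

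The main obstacle I anticipate is not any of the topological deductions --- those are essentially formal consequences of Theorem~A and compactness --- but rather the \textbf{first} step: showing that an \emph{arbitrary} compact $k((T))$-analytic space $Z$, defined abstractly in Berkovich's sense, can be presented as $]X[$ for a genuinely normal algebraic model. Normality is imposed throughout Theorem~A and its corollaries, so one must either arrange a normal model (via normalization of a formal/algebraic model, checking it does not disturb the generic fiber) or verify that the countability arguments survive without normality; the footnote remarking that the algebraic-closedness hypothesis is ``inessential'' suggests the author is comfortable relaxing hypotheses, and I would expect the honest work to lie in reducing the general compact analytic space to the algebraic normal setting where Theorem~A applies.
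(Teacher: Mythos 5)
Your second and third paragraphs (angelicity from Theorem~A plus compactness of $]X[$, sequential compactness via cluster points, and the $\R^*_+$-normalization remark for divisorial points) are fine, but they all sit downstream of your first step, and that step contains a genuine gap. You assert that an arbitrary compact Berkovich space $Z$ over $k((T))$ ``arises as $]X[$ for a suitable normal algebraic variety $X$ over $k$.'' This is not what Raynaud's theory gives, and it is not true in general: Raynaud produces an admissible \emph{formal} $k[[T]]$-scheme as a model of $Z$, and formal schemes topologically of finite type over $k[[T]]$ are typically \emph{not} algebraizable, i.e.\ not of the special form $\varprojlim_n (k[x_i]/\mathfrak{a})/(T^n)$ with $\mathfrak{a}$ generated by polynomials. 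A general affinoid algebra is a quotient of a Tate algebra by an ideal generated by genuinely transcendental power series, and there is no algebraic variety $X$ over $k$ with a map $T: X \to \A^1_k$ whose completed generic fiber recovers it. Your parenthetical ``(or at least formal)'' concedes exactly this point, but Theorem~A is stated for algebraic varieties over $k$ and does not apply to formal models. The proposed gluing step compounds the problem: even when individual affinoid charts admit algebraic models, the analytic gluing data need not be algebraic, so one cannot assemble a single variety $X$ with $Z \cong\, ]X[$.

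The paper's actual proof sidesteps algebraization entirely. Since $Z$ is compact it is covered by finitely many affinoids, and every affinoid embeds as a \emph{closed subset} of a unit polydisc (its algebra is a quotient of a Tate algebra). The only space that needs to be realized as a normalized generic fiber is therefore the polydisc itself, and for that there is an explicit algebraic model: $X = \A^{n+1}_k$ with $D = \{x_1 = 0\}$, where $]X[$ is the set of multiplicative semi-norms on $k[x_1,\dots,x_{n+1}]$, trivial on $k$, with $|x_1| = e^{-1}$ and $|x_i| \le 1$ --- precisely the unit ball over $k((T))$. Theorem~A then makes the ball angelic, and one concludes using that angelicity is inherited by (closed) subspaces and by finite unions of closed angelic subsets. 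So the fix for your argument is to replace ``present $Z$ as $]X[$'' by ``embed each affinoid chart of $Z$ into a polydisc, which \emph{is} an $]X[$'' --- the heavy lifting you correctly identified as the main obstacle is avoided, not solved.
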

Note that this result also holds over any (non archimedean) local  field for simple reasons since any affinoid over such a field is separable.  However  when the residue field of $k$ is not countable, Berkovich analytic spaces are not metrizable.

\medskip

For curves, the density of divisorial 
valuations follows from the semi-stable reduction theorem. And the sequential compactness  is a consequence 
of the fact that any complete $\mathbb{R}$-tree (in the sense of~\cite{valtree}) is sequentially compact. We refer to~\cite{mainetti} 
for a proof.

We note that any compact analytic space over \emph{any} non-archimedean complete fields is angelic by the recent work of J.~Poineau,~\cite{poineau}. 
Even though our result is much more restrictive, 
our approach can be directly adapted to prove the sequential compactness of special compactifications of complex affine varieties in the spirit of~\cite{compact}. 

%

\medskip

The following  natural questions are related to the above results.

\medskip

\noindent {\bf Question 1}. 
Let $X$ be any Berkovich analytic space. Then  any Borel measure on $X$ is a Radon measure.

\medskip

\noindent {\bf Question 2}. 
Let $X$ be any Berkovich analytic space. Then the support of any Radon measure is separable.

\medskip

In dimension $1$,  question 2  has a positive answer. Question 1 remains open.\footnote{there 
is a gap in both proofs of this fact given in~\cite{valtree} and~\cite{baker-rumely}}.

\medskip

\noindent {\bf Acknowledgements:} we thank T. De~Pauw, A. Ducros, M. Jonsson,  J.~Kiwi, J. Nicaise, and R.~Menares for useful discussions on the material presented in this paper.  Also we deeply thank J.~Poineau for kindly informing the author about his proof of the sequential compactness of compact analytic spaces over an arbitrary field.

%
%

\section{Riemann-Zariski spaces}

Our basic references are~\cite{zariski-samuel,vaquie}.

A domain $R$ is said to be a valuation ring if it has no divisors of zero,
and for any non-zero element $x$ in the fraction field of $R$, either $x$ or $x^{-1}$
belong to $R$. Any valuation ring is local, with maximal ideal $\fm_R$
consisting of those $x \in R$ such that $x^{-1}$ does not belong to $R$.

A valuation on $R$ is a function $\nu: R\setminus\{0\} \to \Gamma$ to a totally ordered abelian group $\Gamma$
such that $\nu(ab) = \nu(a)+ \nu(b)$; and $\nu(a+b) \ge \max\{\nu(a), \nu(b)\}$.
Any valuation extends in a unique way to a valuation on the fraction field $K$ of $R$,
and the set $R_\nu = \{ a \in K, \, \nu(a) \ge 0\}$ is a valuation ring in $K$.

Conversely, to a valuation ring $R\subset K$ is associated a unique valuation
$\nu: R \to \Gamma$ up to isomorphism, where $\Gamma$ is the group obtained by moding out
$K^*$ by the multiplicative set $R\setminus \fm_R$.

In the sequel, we shall make no difference between valuation rings and valuations.

\medskip

Let $X$ be a \emph{projective normal irreducible} variety defined over a field $k$.
Abhyankhar's inequality  for a valuation $\nu$  on $k(X)$ states that 
$$
{\rm rat.rk} (\nu)+ {\rm deg.tr} (\nu) \le \dim(X)~,
$$
where  ${\rm rat.rk} (\nu)$ denotes the dimension of the $\Q$ vector space $\Gamma\otimes_\Z \Q$; 
and ${\rm deg.tr} (\nu)$ is the degree of transcendence of the residue field over $k$.
In particular, $\Gamma\otimes_\Z \Q$ is countable. We thus have
\begin{Lem}\label{lem:countable}
The value group of any valuation on $k(X)$ that is trivial on $k$ is countable.
\end{Lem}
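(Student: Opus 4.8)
The plan is to derive countability of the value group $\Gamma$ directly from Abhyankar's inequality, using the one extra fact that ordered groups are torsion-free. Write $\Gamma$ for the value group of $\nu$. Because $\Gamma$ is totally ordered, it has no torsion: if $n\gamma = 0$ for some integer $n \ge 1$ and some $\gamma > 0$, then adding $\gamma$ to itself $n$ times would give $n\gamma > 0$, a contradiction (and similarly if $\gamma < 0$). Hence $\Gamma$ is a torsion-free abelian group, so the natural localization map $\Gamma \to \Gamma \otimes_\Z \Q$ is \emph{injective}. This reduces the problem to showing that the target $\Gamma \otimes_\Z \Q$ is countable.

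For the latter, I would simply quote Abhyankar's inequality as stated above, which bounds the rational rank $\ratrk(\nu) = \dim_\Q(\Gamma \otimes_\Z \Q)$ by $\dim(X)$. Since $X$ is a variety, $\dim(X)$ is finite, so $\Gamma \otimes_\Z \Q$ is a finite-dimensional $\Q$-vector space, and any such space is a countable set. As $\Gamma$ embeds into this countable set by the injection above, $\Gamma$ is itself countable, which is exactly the assertion of the lemma.

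There is essentially no serious obstacle here; the content of the argument is already contained in the remark preceding the statement, that $\Gamma \otimes_\Z \Q$ is countable. The only point requiring a moment's care is the passage from the countability of the rationalization to the countability of $\Gamma$ itself: this step is not automatic for a general abelian group (a torsion group could have countable rationalization while being uncountable), and it is precisely the absence of torsion, which every totally ordered group enjoys for free, that makes $\Gamma \hookrightarrow \Gamma \otimes_\Z \Q$ injective and thus closes the gap.
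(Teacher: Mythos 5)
Your proof is correct and follows exactly the route the paper takes: the paper deduces the lemma directly from Abhyankar's inequality, which forces $\Gamma\otimes_\Z\Q$ to be a finite-dimensional (hence countable) $\Q$-vector space. The only addition you make is to spell out the step the paper leaves implicit, namely that a totally ordered abelian group is torsion-free and therefore injects into its rationalization, which is a worthwhile clarification but not a different argument.
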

The Riemann-Zariski space $\fX$ is the set of valuation rings in $k(X)$ containing $k$
such that the sets
$$
U(A) = \{ R, \, A \subset  R\}
$$
where $A$ ranges over all subrings of finite type of $k(X)$ that contains $k$ 
form a basis of open sets for its topology. A theorem of Zariski states: 
\begin{Thm}\label{thm:RZ-compact}
The Riemann-Zariski space is quasi-compact.
\end{Thm}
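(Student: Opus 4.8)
The plan is to realize $\fX$ as an inverse limit of projective models of $k(X)$ and to derive quasi-compactness from Tychonoff's theorem; the one genuine subtlety is that the Zariski topology is not Hausdorff, so I will need to pass through the constructible topology before I can invoke closedness of the limit.

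First I would set up the inverse system. Let the $X'$ range over the projective normal models of $k(X)$ over $k$ (projective integral $k$-schemes with function field $k(X)$), directed by domination --- any two are dominated by a common one, for instance the normalization of the closure of the diagonal in $X'_1 \times_k X'_2$. Since each $X'$ is proper and separated over $k$, the valuative criterion of properness assigns to every $R \in \fX$ a unique center $c_{X'}(R) \in X'$, namely the point whose local ring is dominated by $R$. These maps are compatible with domination, continuous (the preimage of an affine chart $\spec A'$ is exactly $U(A')$), and surjective (every local ring $\cO_{X',\xi}$ is dominated by some valuation ring of $k(X)$ containing $k$). I would then verify that the induced map $c : \fX \to \varprojlim_{X'} X'$ is a homeomorphism: it is injective because $R = \varinjlim_{X'}\cO_{X', c_{X'}(R)}$ recovers $R$ from its centers, surjective because a compatible family of centers yields such a direct limit, which is a valuation ring of $k(X)$, and bicontinuous because the basic opens $U(A)$ are precisely preimages of affine opens of models. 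This reduces the theorem to the quasi-compactness of $L := \varprojlim_{X'} X'$.

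Each $X'$ is quasi-compact, and by Tychonoff's theorem (which holds for arbitrary products of quasi-compact, not necessarily Hausdorff, spaces) the product $\prod_{X'} X'$ is quasi-compact; $L$ is the subspace cut out by the equalizer conditions $\pi_{X''X'}(x_{X''}) = x_{X'}$ over all dominations $X'' \to X'$. The main obstacle is that these conditions need not carve out a closed subset of the product: the schemes $X'$ are not Hausdorff, so the diagonal of $X'\times X'$ is not closed for the product topology, and one cannot conclude directly that $L$ is closed. I would circumvent this by replacing the Zariski topology on each $X'$ by its constructible (patch) topology, under which a Noetherian spectral space becomes compact Hausdorff. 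The transition maps stay continuous for this finer topology --- being continuous between Noetherian spaces, they pull constructible sets back to constructible sets --- so $\prod_{X'} X'$ is now compact Hausdorff, and $L$, being an intersection of equalizers of maps into Hausdorff factors, is genuinely closed, hence compact, in the constructible topology. Finally, the original topology on $\fX = L$ is coarser than its constructible topology (every Zariski open is constructible), and a topology coarser than a quasi-compact one is again quasi-compact; this yields the theorem. Alternatively one can argue directly through the finite intersection property, constructing by a Zorn-type maximality argument a valuation ring lying in no member of a putative cover and exploiting that $\fX = U(k[a]) \cup U(k[a^{-1}])$ for every $a \in k(X)^{*}$; but the inverse-limit route seems cleaner and isolates the non-Hausdorffness as the sole difficulty.
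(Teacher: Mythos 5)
Your argument is correct, but note that the paper contains no internal proof to compare it with: Theorem~\ref{thm:RZ-compact} is quoted as a theorem of Zariski with a pointer to Zariski--Samuel, and your first step --- the identification $\fX \cong \varprojlim_{X'\in\fB} X'$ via centers --- is exactly the paper's Theorem~\ref{thm:RZ-caract}, which is likewise stated without proof. Granting that identification, the genuinely new content of your proposal is the compactness of the limit, and your treatment of the one real difficulty is the standard correct one: each model is a Noetherian spectral space, any continuous map between Noetherian spaces pulls constructible sets back to constructible sets and is therefore patch-continuous, so in the constructible topology the product is compact Hausdorff, the equalizer conditions cut out a closed set, and quasi-compactness descends to the coarser Zariski limit topology since the identity map is continuous from the finer to the coarser one. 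For comparison, the classical proof in the cited reference (essentially the alternative you sketch at the end) avoids models entirely: one embeds $\fX$ into the compact Hausdorff product $\prod_{a\in k(X)^*}\{0,1\}$ by recording whether $a\in R$, checks that the valuation-ring axioms define a closed subset, and observes that the topology of $\fX$ is coarser than the induced one --- the same coarsening trick you use, applied to a cruder ambient space; your inverse-limit route costs more scheme-theoretic setup but has the advantage of delivering Theorem~\ref{thm:RZ-caract} along the way, which the paper needs anyway. Two small points worth tightening: for the bicontinuity of the center map, when $A=k[a_1,\dots,a_m]$ has fraction field strictly smaller than $k(X)$ you cannot view $\spec A$ as a chart of a model, so pass instead to a model on which every $a_i$ defines a morphism to $\PP^1$ and write $U(A)$ as the preimage of the open set $\bigcap_i a_i^{-1}(\PP^1\setminus\{\infty\})$; and the injectivity claim $R=\varinjlim_{X'}\cO_{X',c_{X'}(R)}$ deserves the one-line justification that if $f\in R$ and $f$ defines a morphism $X'\to\PP^1$, then the center cannot lie over $\infty$, since otherwise $f^{-1}\in\fm_R$.
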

Note that $\fX$ is never Hausdorff except in dimension $0$.

\medskip

Suppose $\nu : k(X) \to \Gamma$ is a valuation with valuation ring $R_\nu$. A projective birational model of $X$
consists of  a birational map
$\phi : X' \dashrightarrow X$ from a normal projective variety to $X$. The map  $\phi$ induces an isomorphism between $k(X)$ and $k(X')$, so
 that 
$\nu$ can be viewed as a valuation on $k(X')$.
The set of closed points $x' \in X'$ such that the local ring $\cO_{X',x'}$ is included in $R_\nu$, and its maximal ideal in $\mathfrak{m}_{R_\nu}$
forms an irreducible subvariety called the center of $\nu$ in $X'$. We denote it by $C(\nu,X')$.
We shall view $C(\nu,X')$ scheme-theoretically as a (non necessarily closed) point in $X'$.
A birational model $\psi: X'' \dashrightarrow X$   dominates another one $\phi: X' \dashrightarrow X$ iff $\mu := \phi^{-1} \circ 
\psi$ is regular. If $X''$ dominates $X'$,  then $\mu (C(\nu,X'')) = C(\nu, X')$.

\medskip

Consider the category $\fB$ of all projective birational models of $X$ up to natural isomorphism, each model 
endowed with the Zariski topology. It is an inductive set for the relation of domination introduced before.
We may thus consider the projective limit  of all birational models of $X$, that is
$\varprojlim_{X'\in \fB} X'$ endowed with the projective limit topology. 
Concretely, a point in $\varprojlim_{X' \in \fB} X'$ is a collection of irreducible subvarieties $Z_{X'}\subset X'$
for each projective birational model $X'$ such that $\mu (Z_{X''}) = Z_{X'}$ if $X''$ dominates $X'$.

For a given valuation $\nu$, we may attach the collection $\{C(\nu,X')\}_{X'\in \fB}$. This defines a map
from $\fX$ to $\varprojlim_{X' \in \fB} X'$. Conversely, given a point $Z= \{Z_{X'}\}_{X'\in \fB}$ in the projective limit, we define 
the subset $R_Z$ of $k(X)$ of those meromorphic functions that are regular at the generic point of
$Z_{X'}$ for any birational model $X'$ of $X$. It is not difficult to check that $R_Z$ is a valuation ring.

These two maps are inverse one to the other. More precisely, one has the following fundamental result again due to Zariski:
\begin{Thm}\label{thm:RZ-caract}
The natural map $\fX \to \varprojlim_{X' \in \fB} X'$ given by $\nu \mapsto\{ C(\nu,X')\}_{X'\in \fB}$
induces a homeomorphism.
\end{Thm}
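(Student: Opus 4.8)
The plan is to exhibit two explicit mutually inverse maps and check that both are continuous, so that the bijection becomes a homeomorphism. The two set-maps are already described in the excerpt: the \emph{forward map} sends a valuation $\nu$ to the coherent system of centers $\{C(\nu,X')\}_{X'\in\fB}$, and the \emph{backward map} sends a point $Z=\{Z_{X'}\}_{X'\in\fB}$ of the projective limit to the ring $R_Z$ of meromorphic functions on $X$ that are regular at the generic point of $Z_{X'}$ for every model $X'$. First I would verify carefully that these two maps are well defined. For the forward map this amounts to the compatibility $\mu(C(\nu,X''))=C(\nu,X')$ whenever $X''$ dominates $X'$, which is recalled above. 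For the backward map, the key point is that $R_Z$ really is a valuation ring of $k(X)$ containing $k$; I would argue this by taking any nonzero $f\in k(X)$ and showing that on a high enough model $X'$ either $f$ or $f^{-1}$ becomes regular at the generic point of $Z_{X'}$ (resolving the indeterminacy locus of $f$ along a blow-up of the ideal of poles and zeros), so that one of $f,f^{-1}$ lies in $R_Z$.

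Second I would check that the two maps are inverse to each other as sets. Going $\nu\mapsto Z\mapsto R_Z$, one must see that $R_Z=R_\nu$: by construction $R_Z$ consists of functions regular at every center $C(\nu,X')$, and a function $f$ is regular at $C(\nu,X')$ for some model exactly when $\nu(f)\ge 0$, because one can always find a model on which the divisor of $f$ is nice relative to the center of $\nu$; conversely if $\nu(f)<0$ then $f$ has a pole along the center on any model where its polar divisor is visible. Going $Z\mapsto R_Z\mapsto\{C(R_Z,X')\}$, one must recover $Z_{X'}=C(R_Z,X')$ on each model, which follows once we know that $R_Z$ dominates precisely the local ring $\cO_{X',Z_{X'}}$.

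Third, and this is where the real work lies, I would establish that both maps are continuous for the stated topologies. The projective limit $\varprojlim_{X'}X'$ carries the coarsest topology making all projections to the Zariski-topologized models continuous, so a basis of open sets is given by preimages $p_{X'}^{-1}(U)$ of Zariski opens $U\subset X'$; dually the Riemann--Zariski topology on $\fX$ has basis the sets $U(A)$. The continuity of the forward map reduces to showing that for a Zariski-open $U\subset X'$ the set of valuations whose center on $X'$ lies in $U$ is open in $\fX$, which one checks by covering $U$ by affine pieces $\spec A'$ and observing that the condition ``center in $\spec A'$'' is exactly $U(A')$ after translating regularity of the coordinate functions into membership in the valuation ring. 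Continuity of the inverse map runs the same comparison in the other direction: a basic open $U(A)$ with $A$ finitely generated over $k$ can be realized on a single model $X'$ (choose $X'$ so that $\spec A$ embeds as an affine open, or dominates such a chart), whereupon $U(A)$ is the preimage of that affine open under $p_{X'}$.

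The main obstacle I anticipate is precisely the bookkeeping in this third step: matching the two bases of opens requires, for every finitely generated subring $A\subset k(X)$ containing $k$, producing a projective birational model $X'$ on which the locus $\{R:A\subset R\}$ becomes the center-lies-in-a-Zariski-open condition, and conversely realizing every model's Zariski open through finitely generated subrings. This is a density/cofinality argument on the category $\fB$ of models, using that $\fB$ is directed under domination and that any finite set of rational functions is simultaneously regular on some model. The algebraic input (valuation rings, Abhyankar, centers) is standard; the care needed is in showing the two topologies induce the same open sets rather than merely a continuous bijection, since $\fX$ is not Hausdorff and one cannot invoke a compact-to-Hausdorff shortcut.
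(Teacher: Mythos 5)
First, a point of reference: the paper does not prove this theorem at all --- it is quoted as a classical result of Zariski (with Zariski--Samuel and Vaqui\'e as references), and the surrounding text only describes the two maps. So your outline should be judged against the classical argument, whose overall architecture (two explicit mutually inverse maps, continuity in both directions by matching the basis $U(A)$ of $\fX$ with preimages of Zariski opens of models) you reproduce correctly. However, there is one genuine flaw, in the treatment of the inverse map.

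You define $R_Z$ as the set of functions regular at the generic point of $Z_{X'}$ for \emph{every} model, i.e. $R_Z=\bigcap_{X'\in\fB}\cO_{X',Z_{X'}}$, and then argue it is a valuation ring by exhibiting \emph{one} high model on which $f$ or $f^{-1}$ is regular at the generic point of $Z_{X'}$. That step is a non sequitur: regularity propagates \emph{up} the tower (if $\mu\colon X''\to X'$ with $\mu(Z_{X''})=Z_{X'}$, the generic point of $Z_{X''}$ maps to that of $Z_{X'}$, so $\cO_{X',Z_{X'}}\subset\cO_{X'',Z_{X''}}$) but not \emph{down}, so regularity on one high model only gives membership in the union $\bigcup_{X'}\cO_{X',Z_{X'}}$, not in the intersection. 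Moreover the intersection is genuinely the wrong object. Take $X=\PP^2$, a closed point $p$ with local coordinates $x,y$, let $E$ be the exceptional divisor of the blow-up of $p$, let $\nu=\ord_E$, and let $Z$ be the system of centers of $\nu$. Then $f=x/y$ satisfies $\nu(f)=0$ and is regular at the generic point of $E$, but neither $f$ nor $f^{-1}$ is regular at $p=Z_X$; hence neither lies in $\bigcap_{X'}\cO_{X',Z_{X'}}$, so that ring is not a valuation ring, and it is strictly smaller than $R_\nu$ --- so with your definition the two maps are not mutually inverse either, and your Step 2 identity $R_{Z(\nu)}=R_\nu$ fails. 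The correct inverse map sends $Z$ to the \emph{directed union} $\bigcup_{X'}\cO_{X',Z_{X'}}$ (a ring, since $\fB$ is directed and the local rings increase under domination); with this fix your resolution-of-indeterminacy argument does exactly what is needed, since it places one of $f,f^{-1}$ in some $\cO_{X',Z_{X'}}$, hence in the union. (In fairness, the paper's own phrase ``regular at the generic point of $Z_{X'}$ for any birational model'' carries the same ambiguity, and read universally it is false; your proof as written relies on that reading.) Two smaller points: the forward map lands in the projective limit only because every valuation has a nonempty center on every \emph{projective} model (valuative criterion of properness), which should be said; and in your continuity argument, for a finitely generated $A\subset k(X)$ whose fraction field may be smaller than $k(X)$, the option ``$\spec A$ embeds as an affine open of a model'' is not available --- instead take the normalized closure of the graph of the generators $f_i\colon X\dashrightarrow(\PP^1)^m$ and use the open set where no $f_i$ takes the value $\infty$.
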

This result has the following useful consequence. For any model $X'$, and any 
 Weil divisor $D'$ in $X'$, let $\cU(X',D')$ be the set of all 
 valuation rings whose center is not included in $D'$.
 Then the  collection of all sets of the form $\cU(X',D')$
gives a basis for the topology of $\fX$.

Note that if we cover 
$X'\setminus D'$ by affine charts $Y_i$, then $X'\setminus Y_i := D_i$ is a divisor since $X'$ is normal, and
$\cU(X',D') = \cup\, \cU(X',D_i)$. Thus the collection of all $\cU(X',D')$'s such that $X'\setminus D'$ is affine also forms a basis for the topology 
of $\fX$.
\begin{Lem}\label{lem:cvg}
A sequence of valuations $\nu_n$ converges to $\nu$ in $\fX$ iff
for any $f\in R_\nu$, we have $f\in R_{\nu_n}$ for $n$ large enough.
\end{Lem}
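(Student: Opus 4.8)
The plan is to unwind the definition of convergence directly through the basis of open sets $U(A)$, where $A$ ranges over the subrings of finite type of $k(X)$ containing $k$. Recall that $\nu_n \to \nu$ in $\fX$ means precisely that every open neighborhood of $\nu$ contains all but finitely many of the $\nu_n$; since the sets $U(A)$ form a basis for the topology, it suffices to test this condition on the basic neighborhoods $U(A)$ of $\nu$, i.e. on those $A$ with $A \subset R_\nu$.

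For the direct implication I would start from $\nu_n \to \nu$ and fix $f \in R_\nu$. The ring $A := k[f]$ is of finite type over $k$ and is contained in $R_\nu$ because $R_\nu$ is a ring containing $k$; hence $\nu \in U(A)$. Convergence then forces $\nu_n \in U(A)$, that is $f \in A \subset R_{\nu_n}$, for all $n$ large enough, which is exactly the asserted condition.

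For the converse I would assume the membership condition and fix an arbitrary basic neighborhood $U(A) \ni \nu$. Writing $A = k[f_1, \ldots, f_m]$, the inclusion $A \subset R_\nu$ forces each generator $f_i$ to lie in $R_\nu$. By hypothesis there is, for each $i$, an index $N_i$ with $f_i \in R_{\nu_n}$ for all $n \ge N_i$. Setting $N := \max_i N_i$ and using once more that every $R_{\nu_n}$ is a ring containing $k$, one gets $A = k[f_1,\dots,f_m] \subset R_{\nu_n}$, i.e. $\nu_n \in U(A)$, for all $n \ge N$. Since $U(A)$ was an arbitrary basic neighborhood of $\nu$ and these form a basis, this yields $\nu_n \to \nu$.

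The only point requiring a little care—and the closest thing to an obstacle—is the passage from ``each generator eventually lies in $R_{\nu_n}$'' to ``the whole ring $A$ eventually lies in $R_{\nu_n}$''. This is exactly where the finite generation of $A$ is used: it allows the finitely many thresholds $N_i$ to be combined into a single $N$. Everything else is a formal consequence of the definition of the basis $U(A)$ and of sequential convergence in a space equipped with a prescribed basis, so no difficulty arises from the non-Hausdorff nature of $\fX$.
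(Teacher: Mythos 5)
Your proof is correct, but it follows a genuinely different route from the paper's. You work directly with the defining basis $U(A)$ of the Riemann--Zariski topology, so the whole argument is ring-theoretic: the forward direction tests convergence against the single neighborhood $U(k[f])$, and the converse reduces an arbitrary basic neighborhood $U(A)$, $A=k[f_1,\dots,f_m]\subset R_\nu$, to the finitely many generators $f_i$, combining the thresholds $N_i$ into one. The paper instead uses the geometric basis $\cU(X',D')$ (valuations whose center in a projective birational model $X'$ is not contained in the divisor $D'$), whose status as a basis is a consequence of Zariski's homeomorphism $\fX \simeq \varprojlim X'$ (Theorem~2.3); its forward direction picks a model resolving the poles of $f$, and its converse picks generators of $k[Y']$ for $Y'=X'\setminus D'$ affine and argues via centers, which is essentially your generator argument transplanted into the geometric framework. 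What your approach buys is economy and self-containedness: it needs nothing beyond the definition of the topology, whereas the paper's proof presupposes the projective-limit description and the notion of center. What the paper's approach buys is coherence with the rest of the text: the dictionary between membership $f\in R_{\nu_n}$ and the geometric condition ``the center of $\nu_n$ is not contained in the pole divisor of $f$'' is exactly what is exploited repeatedly afterwards (e.g.\ in the proofs of Theorem~3.1 and Proposition~4.1), so establishing it inside the proof of this lemma does double duty. Both proofs are complete and correct.
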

\begin{proof}
Suppose first $\nu_n \to \nu$, and pick $f \in R_\nu$. Choose a model $X'$ such that 
$f$ is regular on $X'$, and let $D'$ be the set of poles of $f$.  Since $f$ belongs to $R_\nu$, the center of  $\nu$ cannot be included in $D'$, and $\nu\in\cU(X',D')$. By assumption $\nu_n\to \nu$ hence $\nu_n\in\cU(X',D')$ for $n$ large enough. This implies the center of $\nu_n$ not to be included in $D'$, and 
we conclude that $f\in R_{\nu_n}$.

Conversely, let us assume that for any $f\in R_\nu$, we have $f\in R_{\nu_n}$ for $n$ large enough.
The collection of open sets $\cU(X',D')$ with 
$X'$ a birational  model of  $X$, and $D'\subset X'$ a divisor forms a basis for the topology on $\fX$.
Therefore proving the convergence of $\nu_n$ to $\nu$ is equivalent to show $\nu_n \in \cU(X',D')$
for $n$ large enough if $\nu \in \cU(X',D')$.
As noted above,  we may assume $Y' = X'\setminus D'$ is affine.
Choose a finite set of regular functions $f_i$ on the affine space $Y'$ that generate $k[Y']$.
Since the center of $\nu$ in $X'$ is not included in $D'$, we have $\nu(f_i)\ge0$ for all $i$.
By assumption, we get $\nu_n(f_i)\ge0$ for all $i$ and all large enough $n$.
But then the center of $\nu_n$ cannot be in $D'$, hence $\nu_n \in  \cU(X',D')$. This concludes the proof.
\end{proof}
Finally we shall use several times the
\begin{Lem}\label{lem:blw}
Suppose $\mathfrak{I}$ is a coherent sheaf of ideals on an irreducible  normal variety $X$.
Then there exists a regular birational map $\phi: X' \to X$ such that 
$X'$ is normal and $\mathfrak{I}\cdot \cO_{X'}$ is locally principal (ie. invertible).
\end{Lem}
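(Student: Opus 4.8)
The plan is to realize $X'$ as the normalization of the blow-up of $X$ along $\mathfrak{I}$. We may assume $\mathfrak{I}$ is a nonzero proper ideal sheaf, the cases $\mathfrak{I}=0$ and $\mathfrak{I}=\cO_X$ being trivial (take $X'=X$). First I would consider the blow-up $\pi : \tilde{X} = \mathrm{Proj}\big(\bigoplus_{n\ge0}\mathfrak{I}^n\big) \to X$. By the universal property of the blow-up, $\pi$ is a projective morphism and the inverse image ideal sheaf $\mathfrak{I}\cdot\cO_{\tilde{X}}$ is invertible, hence locally principal. Since $X$ is irreducible and $\mathfrak{I}$ restricts to the unit ideal at the generic point of $X$ (where the local ring is the field $k(X)$), the map $\pi$ is an isomorphism over a dense open subset; in particular $\tilde{X}$ is again integral and $\pi$ is birational.

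Next I would pass to the normalization $\nu : X' \to \tilde{X}$. As $X$ is a variety over $k$, hence excellent, the normalization morphism is finite, and $X'$ is by construction a normal integral variety with function field $k(X)$; thus $\nu$ is finite and birational. Setting $\phi := \pi\circ\nu : X'\to X$, the composite is projective (a composite of projective morphisms), regular, and birational, so the only remaining point is that $\mathfrak{I}\cdot\cO_{X'}$ is locally principal. By associativity of the inverse image ideal sheaf under composition one has $\mathfrak{I}\cdot\cO_{X'}=(\mathfrak{I}\cdot\cO_{\tilde{X}})\cdot\cO_{X'}$. Locally on $\tilde{X}$ the invertible sheaf $\mathfrak{I}\cdot\cO_{\tilde{X}}$ is generated by a single element $g$; since $\tilde{X}$ is integral, $g$ is a nonzerodivisor, and its image in $\cO_{X'}$, which is a domain, is again a nonzerodivisor generating $\mathfrak{I}\cdot\cO_{X'}$ locally. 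Hence $\mathfrak{I}\cdot\cO_{X'}$ is invertible, and $X'$ has all the required properties.

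The only step demanding care — and the closest thing to an obstacle — is the interaction of normalization with the locally principal condition: one must use that $\tilde{X}$ is integral so that the local generator of $\mathfrak{I}\cdot\cO_{\tilde{X}}$ remains a nonzerodivisor after pullback to $X'$, which is precisely what ensures that invertibility is preserved under $\nu$. Everything else is a direct appeal to the universal property of the blow-up and to the finiteness of normalization for varieties over a field.
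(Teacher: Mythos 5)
Your proposal is correct and follows essentially the same route as the paper, whose proof is exactly the one-line recipe ``take $X'$ to be the normalization of the blow-up of $\mathfrak{I}$''; you have merely filled in the standard details (integrality of the blow-up, finiteness of normalization over an excellent base, and the nonzerodivisor argument showing invertibility of $\mathfrak{I}\cdot\cO_{\tilde{X}}$ survives pullback along the dominant map $\nu$). The only cosmetic point is that invertibility of $\mathfrak{I}\cdot\cO_{\tilde{X}}$ is part of the blow-up construction itself rather than a consequence of its universal property, and your degenerate case $\mathfrak{I}=0$ in fact yields a non-invertible zero ideal, so one should simply exclude it (as the intended applications to valuation ideals do).
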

Take $X'$ to be the normalization of the blow-up of $\mathfrak{I}$, and use the universal property
of blow ups.

%
%

\section{Countability properties in Riemann-Zariski spaces}

In this section, $X$ is a \emph{ normal projective algebraic variety} defined over a (non necessarily algebraically closed) field $k$.
Our aim is to prove
\begin{Thm}\label{thm:RZ-dense}
Let $A$ be any subset of $\fX$. Then for any $\nu$ in the closure of $A$, 
either $\nu \in \overline{\{\mu\}}$ for some $\mu \in A$, or
one can find a sequence of valuations $\nu_n \in A$ such that $\nu_n \to \nu$.
\end{Thm}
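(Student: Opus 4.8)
The plan is to argue entirely inside $\fX$, exploiting three tools from the previous section: the reformulation of the basic open sets as $U(B)=\{\mu:\ B\subseteq R_\mu\}$, where $B$ runs over the finitely generated $k$-subalgebras of $k(X)$; the convergence criterion of Lemma~\ref{lem:cvg}; and the countability of the value group, Lemma~\ref{lem:countable}. The first step is to recast the two alternatives. Reading off the basis $\{U(B)\}$, one sees that $\nu\in\overline{\{\mu\}}$ if and only if $R_\nu\subseteq R_\mu$; thus the first alternative says precisely that $A$ contains a coarsening $\mu$ of $\nu$, i.e.\ a point with $R_\nu\subseteq R_\mu$. On the other hand $\nu\in\overline A$ means $U(B)\cap A\neq\emptyset$ for every finitely generated $B\subseteq R_\nu$, and by Lemma~\ref{lem:cvg} building a sequence $\nu_n\to\nu$ amounts to choosing $\nu_n\in A$ so that every $f\in R_\nu$ lies in $R_{\nu_n}$ for all large $n$. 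I would prove that if the first alternative fails, such a sequence can be manufactured.

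Next I would transport the problem onto the birational models through the homeomorphism of Theorem~\ref{thm:RZ-caract}: via the centers, $\nu_n\to\nu$ in $\fX$ is equivalent to the Zariski convergence $C(\nu_n,X')\to C(\nu,X')$ on \emph{every} model $X'\in\fB$. The engine is then a dichotomy for the Zariski topology of a single Noetherian model, proved by Noetherian induction on closed subsets: if $\xi\in\overline{A'}$ and no point of $A'$ generizes $\xi$, one reduces to the case $\overline{A'}=C$ with $\xi$ the generic point of an irreducible component, and successively removes the proper closed sets $\overline{\{a_1\}}\subseteq\overline{\{a_1,a_2\}}\subseteq\cdots$ cut out by approximants $a_i\in A'$; since $\xi$ is generic and no $a_i$ generizes it, each complement stays dense and the process continues, while the descending-chain condition certifies that stopping would force $\xi$ into some $\overline{\{a_i\}}$, the excluded first alternative. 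Divisorial points enter here as the natural approximants: by Lemma~\ref{lem:blw} one may pass to models where the relevant ideals are invertible, so that the centers to be approached are cut out by divisors and approximated by divisorial valuations.

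The countability of $\Gamma_\nu$ is what upgrades this net of approximants into a genuine \emph{sequence}. Using Lemma~\ref{lem:countable} I would fix countably many functions whose $\nu$-values generate $\Gamma_\nu$, and, principalizing the corresponding ideals by Lemma~\ref{lem:blw}, produce a countable tower of models $X_0\prec X_1\prec\cdots$ on which these values are realized by the centers $\xi_m=C(\nu,X_m)$. A diagonal choice $\nu_m\in A\cap\cU(X_m,D_m)$ of divisorial approximants is then tested against Lemma~\ref{lem:cvg}: a given $f\in R_\nu$ has its order controlled on some $X_m$, whence $f\in R_{\nu_m}$ for all large $m$. Conversely, if along the tower the centers of every admissible choice stabilize at a proper generization, these centers assemble through Theorem~\ref{thm:RZ-caract} into a valuation $\mu\in A$ with $R_\nu\subseteq R_\mu$, and we land in the first alternative.

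I expect the main obstacle to be reconciling the absence of first countability with the demand for a sequence. When $k$ is uncountable the residue field of $\nu$ can be uncountable, so $\nu$ has no countable neighbourhood basis in $\fX$; the sets $U(B)$ one must eventually enter are genuinely uncountable in cofinality. The decisive point is therefore that countability of the \emph{value group} alone—rather than of the residue field—together with the Noetherian control of centers on each model, suffices to capture every $f\in R_\nu$ (including those detecting the residue field) in the tail of the approximating sequence, the sole exception being the appearance of a true coarsening of $\nu$ already present in $A$. Making this compensation precise, and checking that the stabilization alternative is exactly the specialization condition $R_\nu\subseteq R_\mu$ with $\mu\in A$, is the heart of the argument.
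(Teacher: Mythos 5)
Your opening reductions are correct and match the paper's (the identification of $\nu\in\overline{\{\mu\}}$ with $R_\nu\subseteq R_\mu$, the tail criterion of Lemma~\ref{lem:cvg}, and the countable tower of models principalizing the valuation ideals of $\nu$ via Lemmas~\ref{lem:countable}, \ref{lem:blw} and~\ref{lem:val-ideal}). But the heart of the proof is missing, and it is missing exactly at the step you state in one sentence: ``a diagonal choice $\nu_m\in A\cap\cU(X_m,D_m)$ \dots\ a given $f\in R_\nu$ has its order controlled on some $X_m$, whence $f\in R_{\nu_m}$ for all large $m$'' is a non sequitur. Lemma~\ref{lem:val-ideal} only gives that $f$ is regular at the generic point of $C(\nu,X_m)$; it says nothing about the centers $C(\nu_{m'},X_m)$ of the \emph{later} terms of your sequence, which may perfectly well all lie inside the polar divisor of $f$. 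Each term of your sequence is constrained by a single open set $\cU(X_m,D_m)$ fixed in advance, whereas convergence requires the tail to escape the polar divisor of \emph{every} $f\in R_\nu$ --- an uncountable family of divisors when $k$ is uncountable. This is the very obstruction you acknowledge in your final paragraph and then leave unresolved. The paper's proof exists precisely to close this gap, by a construction absent from your sketch: (i) a compatible system of \emph{irreducible} subvarieties $Z_n\subset Y_n$ containing $C(\nu,Y_n)$ and equal to the closure of the centers of a countable subset $A'\subset A$ (Lemma~\ref{lem:interm-constr}, proved by a compactness argument on irreducible components plus a maximization argument producing $A'$); (ii) a countable field $K$ over which all these data are defined, so that the relevant ``bad'' loci --- the pro-divisors $W^j$ --- form a \emph{countable} family; (iii) a choice of $\tilde\nu_l\in A'$ whose tail escapes each $W^j$. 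Only then does the argument close: for $f\in R_\nu$, its polar locus meets the irreducible variety $Z_N$ in a divisor of $Z_N$, hence is caught by one of the countably many $W^j$, which the tail avoids. Countability of the value group alone cannot do this; the countable field $K$ and the countable dense subset $A'$ are what convert uncountably many conditions into countably many.

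Two further steps of your plan are wrong as stated. First, your fallback ``the centers \dots\ assemble through Theorem~\ref{thm:RZ-caract} into a valuation $\mu\in A$'': a compatible family of centers determines a point of $\fX$, but there is no reason at all that this point belongs to $A$, and the first alternative of the theorem requires $\mu\in A$. In the paper the failure of the first alternative is used quite differently, through Lemma~\ref{lem:exploit}: it guarantees that every $\mu\in A'$ has, on some model, center strictly smaller than $Z_n$, hence is trapped in some pro-divisor; this is what makes the inductive selection of the $\tilde\nu_l$ possible at every stage. Second, your Noetherian induction on a single model rests on the chain $\overline{\{a_1\}}\subseteq\overline{\{a_1,a_2\}}\subseteq\cdots$, which is \emph{increasing}; the Noetherian condition (descending chains of closed sets stabilize) says nothing about it, so the claimed termination has no justification --- and in any case a dichotomy on one fixed model cannot yield convergence in $\fX$, which demands simultaneous control of the centers on all models of the tower.
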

The proof relies on the following lemma.
Recall that given an affine variety $Y$, a valuation  $\nu : k[Y] \to \Gamma$ and $\gamma \in \Gamma$, the  valuation ideal is defined by 
$$I(\nu,\gamma):= \{f,\, \nu(f) \ge \gamma \}\subset k[Y]~.$$ 
\begin{Lem}\label{lem:val-ideal}
Let $Y$ be an irreducible affine variety.  Suppose $\nu :k[Y] \to \Gamma$ is a valuation
whose center in $Y$ is non-empty.
Pick $\gamma \in \Gamma$, with $\gamma>0$, and consider a proper modification
$\mu: Y' \to Y$ with $Y'$ normal such that
$I(\nu,\gamma)\cdot \cO_{Y'}= \cO_{Y'}(-D)$
for some effective Cartier divisor  $D$.

Then $C(\nu,Y')$ is included in the support of $D$,
and for any $f \in k[Y]$ such that $\nu(f) = \gamma$,
the divisor $\div(f\circ \mu)\subset Y'$ is equal to $D$ at
the generic point of $C(\nu,Y')$.
\end{Lem}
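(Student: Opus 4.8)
The plan is to work entirely at the generic point $c$ of the center $C(\nu,Y')$ and to exploit the compatibility between the valuation $\nu$ and the local ring $\mathcal{O}_{Y',c}$. First I would note that, since $\mu:Y'\to Y$ is a proper modification and $\nu$ has non-empty center in $Y$, the valuative criterion of properness provides a non-empty center $c = C(\nu,Y')$ with $\mu(c) = C(\nu,Y)$; by definition of the center one has $\mathcal{O}_{Y',c}\subseteq R_\nu$ and $\mathfrak{m}_{Y',c} = \mathfrak{m}_{R_\nu}\cap\mathcal{O}_{Y',c}$. In particular $\nu(a)\ge 0$ for every $a\in\mathcal{O}_{Y',c}$, an element $a$ is a unit of $\mathcal{O}_{Y',c}$ exactly when $\nu(a)=0$, and $a$ lies in $\mathfrak{m}_{Y',c}$ exactly when $\nu(a)>0$. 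Throughout I identify $k(Y)$ with $k(Y')$ via $\mu$, so that $\nu(f\circ\mu)=\nu(f)$ for $f\in k[Y]$.

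The key observation, from which both assertions follow, is that every element $g$ of the stalk $I(\nu,\gamma)\cdot\mathcal{O}_{Y',c}$ satisfies $\nu(g)\ge\gamma$. Indeed, writing $g=\sum_i a_i\,(g_i\circ\mu)$ with $a_i\in\mathcal{O}_{Y',c}$ and $g_i\in I(\nu,\gamma)$, one has $\nu(a_i)\ge 0$ and $\nu(g_i\circ\mu)=\nu(g_i)\ge\gamma$, so every term has $\nu\ge\gamma$ and the ultrametric inequality gives $\nu(g)\ge\gamma$. For the first assertion I would argue by contradiction: if $c\notin\supp(D)$, then $\mathcal{O}_{Y'}(-D)_c = I(\nu,\gamma)\cdot\mathcal{O}_{Y',c}$ is the whole local ring $\mathcal{O}_{Y',c}$ and hence contains a unit $u$; but $\nu(u)=0$ contradicts the key observation since $\gamma>0$. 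Therefore $c\in\supp(D)$, and taking closures yields $C(\nu,Y')=\overline{\{c\}}\subseteq\supp(D)$.

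For the second assertion, since the ideal sheaf $\mathcal{O}_{Y'}(-D)$ is invertible I would pick a local generator $h$ at $c$, so that $\mathcal{O}_{Y'}(-D)_c = h\,\mathcal{O}_{Y',c}$ and $\div(h)=D$ in a neighbourhood of $c$. Given $f\in k[Y]$ with $\nu(f)=\gamma$ we have $f\in I(\nu,\gamma)$, hence $f\circ\mu\in\mathcal{O}_{Y'}(-D)_c$ and we may write $f\circ\mu = h\phi$ with $\phi\in\mathcal{O}_{Y',c}$. The key observation applied to $h$ gives $\nu(h)\ge\gamma$, while $\nu(f\circ\mu)=\gamma$; thus $\nu(\phi)=\gamma-\nu(h)\le 0$, and combined with $\nu(\phi)\ge 0$ this forces $\nu(\phi)=0$. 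Consequently $\phi\notin\mathfrak{m}_{R_\nu}$, so $\phi$ is a unit in $\mathcal{O}_{Y',c}$, and $\div(f\circ\mu)=\div(h)+\div(\phi)=D$ near $c$, i.e. $\div(f\circ\mu)$ and $D$ coincide at the generic point of $C(\nu,Y')$.

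The arguments themselves are short, and I do not anticipate a genuine obstacle; the only real care is in the bookkeeping. Specifically, I must justify that the center $c$ exists and satisfies the stated containment $\mathcal{O}_{Y',c}\subseteq R_\nu$ together with $\mathfrak{m}_{Y',c}=\mathfrak{m}_{R_\nu}\cap\mathcal{O}_{Y',c}$, pass correctly to the stalk at $c$ so that $f\circ\mu\in\mathcal{O}_{Y'}(-D)_c$, and handle the local generator $h$ of the invertible ideal so that the identity $f\circ\mu=h\phi$ and the ensuing valuation computation are legitimate. Once these identifications are in place, the contradiction arguments driven by the key observation close both claims.
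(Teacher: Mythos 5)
Your proof is correct, and it streamlines the paper's argument in a genuine way. For the first claim the two arguments are essentially contrapositives of each other: the paper works at a closed point $p$, noting that if some element $f$ of the valuation ideal satisfies $f\circ\mu(p)\neq 0$ then $f^{-1}$ is regular at $p$ but lies outside $R_\nu$, so $p\notin C(\nu,Y')$; you work at the generic point $c$ and derive a contradiction from the unit $1$ lying in the stalk $I(\nu,\gamma)\cdot\cO_{Y',c}$, all of whose elements have valuation at least $\gamma>0$ by your key observation. The second claim is where the routes differ. The paper picks an auxiliary $g\in I(\nu,\gamma)$ with $\nu(g)=\gamma$ whose strict transform misses the center and whose exceptional part equals $D$ (a choice it does not fully justify), writes $f\circ\mu=\tilde f\cdot\hat f$ as strict transform times exceptional part, and runs the valuation identity on the three factors $g$, $\hat f/g$, $\tilde f$. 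You instead take an abstract local generator $h$ of the invertible sheaf $\cO_{Y'}(-D)$ at $c$, factor $f\circ\mu=h\phi$ with $\phi\in\cO_{Y',c}$, get $\nu(h)\geq\gamma$ from the key observation (which substitutes for the paper's divisor inequality $\div(\hat f)\geq\div(g)$), and deduce $\nu(\phi)=0$, so that $\phi$ is a unit in $\cO_{Y',c}$ and $\div(f\circ\mu)=\div(h)+\div(\phi)=D$ at $c$. The closing computation --- the residual factor has value zero, hence is a unit at the center --- is common to both; what your version buys is the elimination of strict transforms and of the well-chosen $g$, making the proof shorter and self-contained, while the paper's version keeps the geometry of the exceptional divisor explicit. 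Your preliminary reductions (existence of the center via the valuative criterion of properness, the domination $\mathfrak{m}_{Y',c}=\mathfrak{m}_{R_\nu}\cap\cO_{Y',c}$, and the identification $\div(h)=D$ near $c$) are standard and correctly used.
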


\begin{proof}
Suppose there exist a closed point $p\in Y'$, and
a regular function $f \in \cO_Y$ such that  $\nu(f) = \gamma$, and $f\circ \mu$
does not vanish  at $p$. Then the lift of $f^{-1}$ is regular at
$p$ but does not belong to the valuation ring at $\nu$. Whence $p \notin C(\nu,Y')$.
This proves the first claim.

Next pick an element $g$ in the valuation ideal so that $\nu(g) = \gamma$, and the strict transform
of $g$ in $Y'$ does not contain $C(\nu,Y')$.
We may assume that the exceptional part of the divisor $\{ g \circ \mu =0 \}$ is equal to $D$.
At a generic point $p \in C(\nu,Y')$, write $f \circ \mu = \tilde{f} \times \hat{f}$
where $\tilde{f} =0$ defines the strict transform of $\{ f =0 \}$ in $Y'$, and $\{\hat{f} =0 \}$
is supported on the exceptional divisor. Since $\gamma = \nu(f)$, we have $\div(\hat{f}) \ge \div(g)$
so that the quotient $\hat{f}/g$ is regular at $p$.
Look at the equation:
$$\gamma = \nu(f) = \nu(g) + \nu(\hat{f}/g) + \nu(\tilde{f})= \gamma + \nu(\hat{f}/g) + \nu(\tilde{f})~.$$
Since $\tilde{f}$ and $\hat{f}/g$ are regular at $p$ that belong to the center of $\nu$, we get
$\nu(\tilde{f})\ge0$, and $\nu(\hat{f}/g) \ge 0$, whence $\nu(\tilde{f})= \nu(\hat{f}/g) = 0$, and 
$\tilde{f}, \hat{f}/g$ are both non-zero at $p$.
This proves the claim.
\end{proof}

\begin{proof}[Proof of Theorem~\ref{thm:RZ-dense}]
There is no loss of generality in assuming $\nu \notin \overline{\{\mu\}}$ for any $\mu \in A$.
Let $Y\subset X$ be an affine chart intersecting the center of $\nu$, and
write $\nu: k[Y] \to \Gamma$ for some totally ordered group $\Gamma$. By Lemma~\ref{lem:countable}, $\nu(\cO_Y)$ is countable.
By Lemma~\ref{lem:blw}, we may produce a sequence
 $ Y_{n+1} \mathop{\longrightarrow}\limits^{\pi_n}Y_n  \mathop{\longrightarrow}\limits^{\pi_{n-1}} ...  \mathop{\longrightarrow}\limits^{\pi_1} Y$ of birational models such that  for any $\gamma \in \Gamma$,
 the valuation ideal  sheaf $I(\nu, \gamma)\cdot \cO_{Y_n}$ is  locally principal
 for all $n$ large enough.

For any subset $B \subset A$, we introduce the Zariski closed set $$C_n(B) := \overline{\cup_{\mu \in B} C(\mu,Y_n)}~.$$
%
%
%
\begin{Lem}\label{lem:interm-constr}
There exists a countable subset $A'\subset A$ and irreducible subvarieties $Z_n \subset Y_n$ 
such that
\begin{enumerate}
\item
$\pi_{n-1}(Z_n) = Z_{n-1}$ for all $n$;
\item
the restriction maps $\pi_{n-1} : Z_n \to Z_{n-1}$ are birational;
\item
 for all $n$, $C(\nu,Y_n) \subset Z_n$;
\item
for all $n$,
$C_n(A') = Z_n$.
\end{enumerate} 
\end{Lem}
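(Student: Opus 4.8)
The plan is to thread a single irreducible subvariety $Z_n$ through each model, built as the component of $C_n(A)$ containing the center $C(\nu,Y_n)$, and then to harvest a countable $A'$ whose centers remain inside and are dense in this thread at every level. Recall two structural facts I will lean on throughout: each $\pi_n\colon Y_{n+1}\to Y_n$ is a \emph{proper birational} morphism sending centers to centers, so that $\pi_n\bigl(C(\mu,Y_{n+1})\bigr)=C(\mu,Y_n)$ and, since proper maps are closed, $\pi_n\bigl(C_{n+1}(A)\bigr)=C_{n-1}$-compatibly, i.e.\ $\pi_n(C_{n+1}(A))=C_n(A)$ and $\pi_n(C(\nu,Y_{n+1}))=C(\nu,Y_n)$. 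The tower $(Y_n)$ itself is available from Lemma~\ref{lem:blw} applied to the \emph{countably many} valuation ideals $I(\nu,\gamma)$, $\gamma\in\nu(\cO_Y)$, which is a countable set by Lemma~\ref{lem:countable}; this is the only point where countability of the value group enters.

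The first step is the \emph{center containment} $C(\nu,Y_n)\subseteq C_n(A)$. Let $\eta$ be the generic point of $C(\nu,Y_n)$ and suppose $\eta\notin C_n(A)$. Using the basis of open sets $\cU(Y_n,D')$ for which $Y_n\setminus D'$ is affine, I would pick such a neighbourhood $V=Y_n\setminus D'$ with $\eta\in V$, together with $g\in k[V]$ vanishing on $C_n(A)\cap V$ but with $g(\eta)\neq 0$. Setting $D'':=D'\cup\overline{\{g=0\}}$, the complement $Y_n\setminus D''=V\setminus\{g=0\}$ is affine and contains $\eta$, so $\nu\in\cU(Y_n,D'')$. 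But any $\mu\in A\cap\cU(Y_n,D'')$ would have $C(\mu,Y_n)$ meeting $\{g\neq 0\}$, whereas $C(\mu,Y_n)\subseteq C_n(A)$ forces $C(\mu,Y_n)\cap V\subseteq\{g=0\}$; thus $A\cap\cU(Y_n,D'')=\varnothing$, contradicting $\nu\in\overline{A}$. Hence $\eta\in C_n(A)$, and I set $Z_n$ to be an irreducible component of $C_n(A)$ containing $C(\nu,Y_n)$.

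Next I would organise these components into a compatible thread with \emph{birational} transition maps. Here I expect the main obstacle. Because $\dim C(\nu,Y_n)$ is nondecreasing in $n$ (centers surject under $\pi_n$) and bounded by $\dim X$ (Abhyankar), it stabilises; working beyond that stage I would use the isomorphism locus of the birational morphisms $\pi_n$ to identify, over a common dense open, the components of $C_{n+1}(A)$ with those of $C_n(A)$, so that the distinguished component is carried to its strict transform and the induced maps $\pi_n\colon Z_{n+1}\to Z_n$ are birational with $\pi_n(Z_{n+1})=Z_{n}$. The delicate point is that, since the $Y_n$ are built by blowing up the valuation ideals of $\nu$ itself, the center $C(\nu,Y_n)$ lies precisely in the exceptional (non-isomorphism) locus of $\pi_n$ (cf.\ Lemma~\ref{lem:val-ideal}), so one cannot simply declare $Z_{n+1}$ to be a strict transform and expect property~(3) to persist. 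Reconciling ``$C(\nu,Y_{n+1})\subseteq Z_{n+1}$'' with ``$\pi_n\colon Z_{n+1}\to Z_n$ birational'' — i.e.\ choosing the thread large enough to meet the isomorphism locus yet tight enough to trap the exceptional center — is exactly the step I would expect to require the most care, presumably by a simultaneous recursion that refines the tower when necessary.

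Finally, granting the thread $(Z_n)$ with properties (1)--(3), I would extract $A'$. Since each $Y_n$ is a noetherian space of finite dimension, every subset has a countable subset with the same closure (noetherian induction on the irreducible components), and by the component-density remark $Z_n=\overline{\bigcup\{C(\mu,Y_n):C(\mu,Y_n)\subseteq Z_n\}}$. To guarantee that a chosen $\mu$ has its \emph{entire} center-thread inside $(Z_m)$, I would restrict to those $\mu$ whose center meets the common isomorphism locus $W\subseteq Z_1$ of all the birational maps $Z_{m}\to Z_1$; this $W$ is the complement of a countable union of proper closed subsets, hence dense when $k$ is uncountable (a positive-dimensional variety over an uncountable field is not a countable union of proper subvarieties), while for countable $k$ everything in sight is countable and one argues directly. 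As shown above for the centers of $A$, these ``good'' $\mu$ are still dense in $Z_1$, and birationality transports their density to every $Z_n$. Choosing countably many good $\mu$ dense in $Z_1$ and taking $A'$ to be this countable set then yields $C_n(A')=Z_n$ for all $n$, establishing (4).
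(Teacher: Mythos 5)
You prove the preliminary containment $C(\nu,Y_n)\subseteq C_n(A)$ carefully (the paper leaves it implicit when it asserts the sets $\mathcal{K}_n$ below are nonempty), but the load-bearing step is the one you flag and then leave unresolved: you never construct the thread $(Z_n)$ satisfying (1)--(3). Your bottom-up strategy --- fix a component at one level and carry it upward via strict transforms over the isomorphism locus --- fails for exactly the reason you identify ($C(\nu,Y_{n+1})$ sits over the exceptional locus of $\pi_n$, and the component containing it need not be a strict transform), and ``presumably by a simultaneous recursion that refines the tower'' is not an argument. The paper's idea, which you are missing, is to select the whole thread at once by compactness: the set $\mathcal{Z}$ of sequences $Z_\bullet=(Z_n)_n$, where $Z_n$ is an irreducible component of $C_n(A)$ and $\pi_n(Z_{n+1})\subseteq Z_n$, is a closed subset of the compact product $\Sigma=\prod_{n=1}^{\infty}\{1,\dots,d(n)\}$ of finite discrete sets, and $\mathcal{K}_n=\{Z_\bullet\in\mathcal{Z}:\ Z_n\supseteq C(\nu,Y_n)\}$ is a decreasing sequence of nonempty compact subsets (the condition propagates downward because $\pi_{n-1}(C(\nu,Y_n))=C(\nu,Y_{n-1})$). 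Any point of $\bigcap_n\mathcal{K}_n$ gives (1) and (3) at all levels simultaneously; birationality (2) then follows from the dimension-stabilization observation you make, after replacing $(Y_n)$ by a shifted tower $(Y_{n+N})$ --- no refinement of the tower and no strict-transform bookkeeping is needed.

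Your harvesting of $A'$ is also broken, and the paper proceeds quite differently. Restricting to $\mu$ whose level-$1$ center meets the common isomorphism locus $W$ of the maps $Z_m\to Z_1$ does not pin down the higher centers: $C(\mu,Y_1)\subseteq Z_1$ together with ``meets $W$'' constrains only the level-$1$ center, while $C(\mu,Y_m)$ can be any irreducible subset of $Y_m$ dominating $C(\mu,Y_1)$ --- for instance an exceptional subvariety over a point of $W$ --- so the center-thread of $\mu$ may leave $(Z_m)$ at any level; the isomorphism locus of $Z_m\to Z_1$ says nothing about $Y_m\to Y_1$ along the relevant fibers. (The uncountable-field digression is moreover beside the point: centers are generic points of subvarieties, and a countable union of proper closed subsets of an irreducible variety contains no nonempty open over any field.) The paper never requires a single valuation to be ``good at all levels'': it sets $A_n=\{\mu\in A:\ C(\mu,Y_n)\subseteq Z_n\}$, proves $C_k(A_n)=Z_k$ for all $k\le n$ by pushing forward under the proper, hence closed, maps $\pi$, extracts for each $n$ a countable $\mathcal{N}^n\subseteq A_n$ with $C_n(\mathcal{N}^n)=Z_n$ via a maximality argument for countable subsets in the noetherian Zariski topology (maximizing $(\dim C_n(\mathcal{N}),\,r_n(\mathcal{N}))$ lexicographically), and takes $A'=\bigcup_n\mathcal{N}^n$. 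So apart from your correct proof of $C(\nu,Y_n)\subseteq C_n(A)$, both essential steps of the lemma are missing or would fail as written.
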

Pick any enumeration $\{\nu_m\}$ of the elements of $A'$, and
consider a countable field $K$ such that all varieties $Z_n, C(\nu_m,Y_n), C (\nu, Y_n)$ are defined over $K$. 

Let us introduce the following terminology. A pro-divisor $W  = \{ W_n\}$ is a collection of (possibly zero) reduced divisors $W_n$ in $Z_n$  defined over $K$, such that $\pi_n(W_{n+1}) = W_n$ for all $n$, there exists an $N$ for which $W_N$ is irreducible, and $W_n = (\pi_{n-1} \circ ... \circ \pi_N)^{-1} (W_N)$ for all $n\ge N$. We call the minimal $N$ having this property the height of $W$ and denote it by $h(W)$.
Since for each $N$  the set of prime divisors of $Z_N$ and defined over $K$ is countable, we may  find a sequence $W^j$ enumerating all  pro-divisors.

For any valuation $\mu\in A'$ and any pro-divisor $W$, we write $\mu \subset W$  if
$C(\mu, Y_n) \subset W_n$ for all $n$. This condition is equivalent to impose $C(\mu, Y_N) \subset W_N$ for $N= h(W)$. 
\begin{Lem}\label{lem:exploit}
Suppose $\mu \in A'$, and 
$\nu \notin \overline{\{\mu\}}$. Then one can find an integer $j$ such that 
$\mu\subset W^j$.
\end{Lem}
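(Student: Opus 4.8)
\emph{The plan} is to exploit the failure $\nu\notin\overline{\{\mu\}}$ through a single separating function, and then convert it into a pro-divisor trapping the entire center tower of $\mu$. First I would record what the hypothesis gives. Since the sets $\cU(X',D')$ form a basis, $\nu\in\overline{\{\mu\}}$ is equivalent to $R_\nu\subseteq R_\mu$; hence $\nu\notin\overline{\{\mu\}}$ produces $f\in R_\nu\setminus R_\mu$, i.e. $\nu(f)\ge 0>\mu(f)$. (If the center of $\mu$ in $Y$ is empty then $C(\mu,Y_n)=\emptyset$ for all $n$ and $\mu$ lies in every pro-divisor, so I assume this center is non-empty.) The heart of the argument is the claim that, because the tower $\{Y_n\}$ was chosen to make the valuation ideals of $\nu$ locally principal, every such $f$ becomes \emph{regular} at $C(\nu,Y_N)$ once $N$ is large. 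To prove this I would set $\gamma:=\nu(f)$ and pick $t\in k[Y]$ with $\tau:=\nu(t)>0$; for $N$ large both $I(\nu,\gamma)\cO_{Y_N}$ and $I(\nu,\tau)\cO_{Y_N}$ are invertible, and Lemma~\ref{lem:val-ideal} identifies, at the generic point of $C(\nu,Y_N)$, the divisor cut out by any function whose $\nu$-value equals the one defining the ideal. Applying this to $f$ when $\gamma>0$ shows $\div(f)$ is effective there, while when $\gamma=0$ the two functions $tf$ and $t$ share the value $\tau$, hence have the same divisor at that point, so that $\div(f)=0$ there; in either case $f$ is regular at $C(\nu,Y_N)$.

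Granting this, let $P$ denote the polar locus of $f$ on the normal model $Y_N$. Regularity of $f$ at $C(\nu,Y_N)$ gives $C(\nu,Y_N)\not\subseteq\supp P$, whereas $\mu(f)<0$ forces $f\notin\cO_{Y_N,C(\mu,Y_N)}\subseteq R_\mu$, so that $C(\mu,Y_N)\subseteq\supp P$. If $\overline{C(\mu,Y_N)}$ were all of $Z_N$ we would get $Z_N\subseteq\supp P$, hence $C(\nu,Y_N)\subseteq Z_N\subseteq\supp P$, a contradiction; therefore $\overline{C(\mu,Y_N)}\subsetneq Z_N$. Since each $\pi_{n-1}\colon Z_n\to Z_{n-1}$ is birational and sends $C(\mu,Y_n)$ onto $C(\mu,Y_{n-1})$, dimensions are preserved, so $\dim\overline{C(\mu,Y_{n-1})}\le\dim\overline{C(\mu,Y_n)}<\dim Z_n=\dim Z_{n-1}$; a proper inclusion at one level therefore propagates downward to the bottom level, giving $\overline{C(\mu,Y)}\subsetneq Z_0\subseteq Y$.

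It then remains to manufacture the pro-divisor. As $\overline{C(\mu,Y)}$ and $Z_0$ are both defined over the countable field $K$, I would choose $\phi$ regular on an affine chart of $Z_0$, defined over $K$, vanishing on $\overline{C(\mu,Y)}$ but not identically on $Z_0$, and take $W_0$ to be a $K$-irreducible component of $\{\phi=0\}$ containing $C(\mu,Y)$; this is an irreducible divisor of $Z_0$ over $K$. Setting $W_n:=(\pi_{n-1}\circ\cdots\circ\pi_0)^{-1}(W_0)$ yields a bona fide pro-divisor of height $0$: only preimages under the surjective maps $\pi$ are involved, so each $W_n$ is a reduced divisor and $\pi_n(W_{n+1})=W_n$ holds automatically, with no push-forward contraction to worry about. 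Moreover $C(\mu,Y_n)\subseteq\pi^{-1}(C(\mu,Y))\subseteq\pi^{-1}(W_0)=W_n$ for every $n$, that is $\mu\subset W$. Since $W$ appears in the enumeration $\{W^j\}$, this produces the required index $j$.

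The one genuinely delicate point is the heart claim of the first paragraph—that membership in $R_\nu$ upgrades to regularity at $C(\nu,Y_N)$ on the $\nu$-adapted tower, together with the separate treatment of the unit case $\nu(f)=0$; this is exactly where Lemma~\ref{lem:val-ideal} and the construction of the tower are used, and it is the step I expect to require the most care. Once it is in place, the passage from the single function $f$ to a proper subvariety $\overline{C(\mu,Y_N)}\subsetneq Z_N$, and then to the pulled-back pro-divisor, is essentially formal.
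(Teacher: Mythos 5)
Your overall strategy is essentially the paper's argument run in the opposite (direct) direction: the paper proves the contrapositive, assuming the centers of $\mu$ coincide with the $Z_n$'s and deducing $R_\nu\subset R_\mu$ from the same ``regularity at the center'' claim, while you start from a separating function $f\in R_\nu\setminus R_\mu$ and force the center of $\mu$ into a proper $K$-subvariety of $Z_N$, then into a pro-divisor. Your final step (pushing the proper inclusion down to the bottom level, where containment of $C(\mu,Y)$ in $Z_0$ is actually guaranteed because the $A_n$ are decreasing, and pulling back a $K$-prime divisor to get a height-zero pro-divisor) is sound and in fact more explicit than the paper, which leaves the passage from ``$C(\mu,Y_n)\neq Z_n$ for some $n$'' to ``$\mu\subset W^j$ for some $j$'' entirely implicit. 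One caveat you share with the paper: the dichotomy ``$\overline{C(\mu,Y_N)}=Z_N$ or $\overline{C(\mu,Y_N)}\subsetneq Z_N$'' tacitly assumes $C(\mu,Y_N)\subset Z_N$ at the large level $N$, which for $\mu\in A'$ is only automatic at the levels $n$ with $\mu\in\mathcal{N}^m\subset A_m$, $n\le m$; the paper's proof makes the same silent assumption.

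There is, however, a genuine flaw in your proof of the ``heart claim''. You apply Lemma~\ref{lem:val-ideal} directly to $f$ (when $\nu(f)>0$) and to $tf$ and $t$ (when $\nu(f)=0$). But that lemma, and the valuation ideals $I(\nu,\gamma)\subset k[Y]$ it concerns, are about \emph{regular} functions on $Y$, and your $f$ can never be regular: since the center of $\mu$ in the affine chart $Y$ is non-empty, $k[Y]\subset\cO_{Y,C(\mu,Y)}\subset R_\mu$, so every element of $R_\nu\setminus R_\mu$ is a genuinely rational function (and so is $tf$ in general). Moreover the precise conclusion of the lemma, $\div(f\circ\mu)=D$ at the center, is false for rational $f$: writing $f=g/h$ one only gets $\div(f)=D_{\nu(g)}-D_{\nu(h)}$, which need not equal $D_{\nu(f)}$ since valuation ideals are not multiplicative. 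So the step fails as written. The repair is exactly the route the paper takes (this is what ``as in the proof above'' refers to): write $f=g/h$ with $g,h\in k[Y]$, take $N$ large enough that both $I(\nu,\nu(g))\cdot\cO_{Y_N}$ and $I(\nu,\nu(h))\cdot\cO_{Y_N}$ are invertible, apply Lemma~\ref{lem:val-ideal} to $g$ and to $h$ separately, and use $\nu(g)\ge\nu(h)$, hence $I(\nu,\nu(g))\subset I(\nu,\nu(h))$ and $D_{\nu(g)}\ge D_{\nu(h)}$ at the generic point of $C(\nu,Y_N)$, to get $\div(f)\ge 0$ there; normality of $Y_N$ then gives $f\in\cO_{Y_N,C(\nu,Y_N)}$. (The cases $\nu(g)=0$ or $\nu(h)=0$ are trivial, since a regular function of value zero is a unit at the center; your auxiliary $t$ is not needed once one works with the numerator and denominator.) With this substitution the rest of your argument goes through.
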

We now define by induction a sequence of valuations $\tilde{\nu}_m$ and of integers $j_m < j_{m+1}$ such that
\begin{itemize}
\item $\tilde{\nu}_l \subset W^{j_l}$;
\item $\tilde{\nu}_l \not\subset W^{j}$ for all $j < j_l$.
\end{itemize}
To do so we proceed as follows. Set $A_j = \{ \mu \in A', \, \mu \subset W^j\}$.
Set $j_1 = \min \{ j, \, A_j \neq \emptyset \}$, and define recursively $j_{m+1} = \min \{ j > j_m , \, A_j \setminus \cup_{l<j} A_l \neq \emptyset \}$. 

Let us justify the existence of $j_m$ for all $m$. By contradiction, assume that 
$A_j \setminus \cup_{l<j} A_l  = \emptyset$ for all $j > j_m$. Since by Lemma~\ref{lem:exploit} any valuation in $A'$ belongs to some $W^j$,  we have $A' = \cup_{j \ge 0} A_j$.
As a consequence,  we conclude that $A' \subset \cup_{j \le j_m} A_j$
which  forces all
valuations in $A'$ to have a center included in some fixed divisor of $Z_1$. This contradicts 
property (4) of Lemma~\ref{lem:interm-constr}.

Finally we pick any sequence $\tilde{\nu}_l \in A_{j_m}$.

\medskip

We now prove that $\tilde{\nu}_n$ converges to $\nu$. Pick $f \in R_\nu$, write $f = \frac{g}{h}$ with $g,h \in k[Y]$. Pick $N$ sufficiently large such that $I(\nu(g),\nu)$ and $I(\nu(h),\nu)$
are locally principal in $Y_N$. By Lemma~\ref{lem:val-ideal},
the lift of $f$ to $Y_N$ is regular at the generic point
of $C(\nu,Y_N)$.

Let $Z$ be the set of poles of $f$ in $Y_N$. 
Since $f$ is regular along $C(\nu, Y_N)\subset Z_N$,  and $Z_N$ is irreducible, $Z \cap Z_N$ is a  divisor of $Z_N$ that is possibly empty. If so, then  $f$ is regular at the generic point of $C(\tilde{\nu}_l,Y_N)$ for all $l$ which implies $f \in R_{\tilde{\nu}_l}$ for all $l$. Otherwise,  $Z \cap Z_N$ determines a unique pro-divisor say $W^J$ for some $J$ such that 
$W^J_N = Z \cap Z_N$, and $W^J_n = (\pi_{n-1} \circ ... \circ \pi_N)^{-1}(W^J_N)$ for all $n\ge N$.

By construction,  for any integer $l$ such that $j_l >J$, we have $\tilde{\nu}_l \not\subset W^J$.
In other words, we have $C(\tilde{\nu}_l,Y_N) \not\subset W^J_N = Z \cap Z_N$.
We have thus proved that $f$ is regular at the generic point of $C(\tilde{\nu}_l,Y_N)$ for $l$ large enough. This implies $f \in R_{\tilde{\nu}_l}$ for $l$ large enough, and concludes the proof.
\end{proof}

\begin{proof}[Proof of Lemma~\ref{lem:exploit}]
Pick any valuation $\mu\in A'$.  Assume that the center of $\mu$ in $Y_n$ is equal to $Z_n$ for all $n$. We need to prove that $\nu \in \overline{\mu}$, i.e. $R_\nu \subset R_\mu$.

Pick any $f \in R_\nu$. As in the proof above, we can find an $N$ such that $f$ is regular at the generic point of the center of $\nu$ in $Y_N$. Hence $f$ is regular at the generic point of $Z_n$.
Since the latter is the center of $\mu$, we conclude that $f \in R_\mu$.
\end{proof}

\begin{proof}[Proof of Lemma~\ref{lem:interm-constr}]
We first construct varieties $Z_n$ in $Y_n$ satisfying the first three properties.
For that purpose, let us introduce the set
$\mathcal{Z}$ be the set of all sequences $Z_\bullet : = \{Z_n\}_n$ of subvarieties of $Y_n$ such that $Z_n$ is an  irreducible component of $C_n(A)$ and
$\pi_n(Z_{n+1}) \subset Z_n$. Since we have $\pi_n (C(\mu, Y_{n+1})) = C(\mu, Y_n)$ for any valuation, note that $\pi_n (Z_{n+1}) = Z_n$ for all $n$.

For any fixed $n$, the set of irreducible components of $C_n(A)$ is finite of cardinality $d(n)$.
The set of sequences of irreducibles components of $C_n(A)$ is thus in natural bijection with 
$\Sigma = \prod_{n=1}^\infty \{ 1, ..., d(n)\}$.  For the product topology, $\Sigma$ is a compact (totally disconneted) space, and $\mathcal{Z}$ is a closed (hence compact) subset of $\Sigma$.

Now consider $\mathcal{K}_n := \{ Z_\bullet \in \mathcal{Z}, \, Z_n\supset C (\nu,Y_n)\}$. 
If $Z_\bullet$  belongs to $\mathcal{K}_n$, then we have $Z_{n-1} = \pi_{n-1}(Z_n) \supset \pi_{n-1} (C(\nu, Y_n)) = C(\nu ,Y_{n-1})$.  Hence $\mathcal{K}_n$ forms a decreasing sequence
of non empty compact subsets of $\mathcal{Z}$. The intersection $\cap_n \mathcal{K}_n$ is thus non empty, and we may pick  $Z_\bullet \in \cap_n \mathcal{K}_n$.

This sequence of varieties $Z_n$ satisfies the properties (1) and (2) of the lemma. 
Since $\pi_{n-1}(Z_n) = Z_{n-1}$, the dimension of $Z_n$ is increasing hence stationnary.
Replacing the sequence $Y_n$ by $Y_{n+N}$ with $N$ large enough, we may thus assume
$\pi_{n-1}:Z_n \to Z_{n-1}$ is birational for all $n$, so that (3) also holds.

For each $n$, we now  define $A_n := \{ \mu \in A, \, C(\mu, Y_n) \subset Z_n\}$.
We claim that for all $k\le n$, we have $C_k(A_n) = Z_n$.

Since $Z_n$ is an irreducible component of $C_n(A)$, it is clear that $C_n(A_n) = Z_n$.
Now $\pi_{n-1}$ is birational hence closed, which implies
$\pi_{n-1}  (\bar{S}) = \overline{\pi_{n-1}(S)}$ for any subset $S \subset Y_n$.
We infer 
\begin{multline*}
Z_{n-1} = \pi_{n-1} (Z_n) =
\pi_{n-1} ( C_n(A_n)) = \overline{\pi_{n-1}(\cup_{\mu\in A_n} C(\mu, Y_n))}
=\\
\overline{ \cup_{\mu \in A_n} C(\mu, Y_{n-1})}
= C_{n-1}(A_n)~,
\end{multline*}
and we conclude by a descending induction.

Finally we construct the subset $A'$.

First we shall construct countable subsets of $A$ with special properties.
 To any countable subset $\mathcal{N}$ of $A_n$, we attach the integer
$d_n(\mathcal{N}) = \dim C_n(\mathcal{N})$, and let $r_n(\mathcal{N})$ be the number of irreducible components of $C_n(\mathcal{N})$. Suppose one can find a sequence of countable sets $\mathcal{N}^j$ such that  $d_n(\mathcal{N}^j)$ is constant, and 
$r_n(\mathcal{N}^j) \to \infty$. Then 
$d_n(\mathcal{N} ) > d_n(\mathcal{N}^j)$ for $\mathcal{N} := \cup_j \mathcal{N}^j$. This shows that there exists a countable subset $\mathcal{N}$ of $A_n$ maximizing the pair $(d_n(\mathcal{N}), r_n(\mathcal{N}))$ for the lexicographic order on $\N^2$. We claim that $d_n(\mathcal{N}) = \dim Z_n$.

Indeed if it were not the case,  and since  $\cup_{A_n} C(\mu, Y_n)$ is Zariski dense in $Z_n$, 
then we could find a valuation $\mu \in A_n$ such that
$C(\mu, Y_n) \not\subset C_n(\mathcal{N})$ which would contradict the maximality of 
$(d_n(\mathcal{N}), r_n(\mathcal{N}))$.
Since $Z_n$ is irreducible, for each $n$ we have found 
a countable subset $\mathcal{N}^n \subset A_n$ such that  $C_n(\mathcal{N}^n) = Z_n$.
We conclude the proof by setting $A' := \cup_n \mathcal{N}^n$.
\end{proof}

\section{Proof of the main results}

In this last section, we explain how Theorem~\ref{thm:dense} can be deduced from  Theorems~\ref{thm:RZ-dense}. Proofs of  Corollaries~A and~B are given at the end of this section.


\subsection{The projection of the Riemann-Zariski space to the generic fiber}

\begin{Prop}\label{lem:projec}
Pick any projective normal variety $Y$, and any effective Cartier divisor $E$ in $Y$.
Denote by $\mathfrak{Y}(E)$ the subset of the Riemann-Zariski space  $\mathfrak{Y}$ of $Y$ consisting of those valuations whose center in $Y$ is included in $E$. 

Then there exists  a surjective and continuous map $\Pi: \mathfrak{Y}(E) \to ]Y[$
such that for any $\nu$, the center of $\nu$ in $Y$ is included in  $r_Y(\Pi(\nu))$ (with equality when $\nu$ is divisorial), and any divisorial point in $]Y[$ has a preimage in $ \mathfrak{Y}(E)$ which is divisorial too. 
\end{Prop}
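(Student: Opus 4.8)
The plan is to construct $\Pi$ by a ``rank-one reduction at $f$''. Work on an affine chart $Y=\spec A$ meeting the center of $\nu$, on which the Cartier divisor is cut out by a single equation $f$; since on overlaps two such local equations differ by a unit that is invertible along $E$, and such a unit has $\nu$-value $0$ (its center avoids $E$), the construction will patch. Given $\nu\in\mathfrak{Y}(E)$ with value group $\Gamma$, set $\gamma_0\=\nu(f)>0$ (positivity encodes that the center lies in $E$). Let $H\subset H'$ be the two adjacent convex subgroups of $\Gamma$ straddling $\gamma_0$, i.e. $H$ the largest convex subgroup with $\gamma_0\notin H$ and $H'$ the smallest with $\gamma_0\in H'$; then $H'/H$ is archimedean, so by \Holder's theorem there is a unique order embedding $\iota:H'/H\hookrightarrow\R$ with $\iota(\gamma_0)=1$. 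Define $\psi:\Gamma\to\R\cup\{\pm\infty\}$ by $\psi(\gamma)=\iota(\gamma\bmod H)$ if $\gamma\in H'$, and $\psi(\gamma)=\pm\infty$ according as $\gamma\gtrless H'$ (well defined since $H'$ is convex), and set $\Pi(\nu)\=\psi\circ\nu$ on regular functions. Because $\psi$ is order preserving and additive on $H'$, and because $\nu\ge0$ on $A$ (the center lies in the chart), $\Pi(\nu)$ is a valuation $A\to\R_+\cup\{+\infty\}$ with $\Pi(\nu)(f)=\iota(\gamma_0)=1$; hence it is a genuine point of $]Y[$. This makes $\Pi$ canonical, $\iota$ being unique.

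The center claim is then a direct computation. As an ideal, $r_Y(\Pi(\nu))=\{g:\psi(\nu(g))>0\}=\{g:\nu(g)>H\}$, while $C(\nu,Y)=\{g:\nu(g)>0\}$; since $\{\nu>H\}\subset\{\nu>0\}$ the reverse inclusion of subvarieties gives $C(\nu,Y)\subset\overline{r_Y(\Pi(\nu))}$. When $\nu$ is divisorial it has rank one, so $H=0$, $H'=\Gamma$, and $\psi=\iota$ is injective with no $+\infty$ value; thus $\Pi(\nu)$ is again a rank-one, discrete, norm-valued point with the same residue field, hence divisorial, and $\{\nu>H\}=\{\nu>0\}$ forces equality of centers. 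For the last assertion, a divisorial $w\in\,]Y[$ is itself a rank-one discrete real valuation of $k(Y)$ (its kernel $\{w=+\infty\}$ is trivial), whose center contains $f$, hence lies in $E$; taking $\nu\=w$ gives $H=0$, $\psi=\id$, so $\Pi(\nu)=w$ and $\nu$ is the desired divisorial preimage.

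For surjectivity I would invert the construction via composite valuations. Given $w\in\,]Y[$, let $\fp=\{w=+\infty\}$, a prime of $A$, and $Z=V(\fp)$; then $w$ descends to a rank-one real valuation $\bar w$ on $k(Z)=\mathrm{Frac}(A/\fp)$. Choose (by standard valuation theory, \cite{zariski-samuel}) a valuation $\mu$ of $k(Y)$ centered at the generic point of $Z$ with residue field exactly $k(Z)$, and form the composite $\nu=\mu\ast\bar w$, whose value group is the extension $0\to\Gamma_{\bar w}\to\Gamma_\nu\to\Gamma_\mu\to0$ with $\Gamma_{\bar w}\cong\R$ a convex subgroup. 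Since $f\notin\fp$ one has $\mu(f)=0$, so $\gamma_0=\nu(f)$ lies in this convex copy of $\R$ and equals $1$; unravelling the definition of $\psi$ for this $\Gamma_\nu$ (here $H=0$, $H'=\Gamma_{\bar w}$) yields $\Pi(\nu)(g)=+\infty$ for $g\in\fp$ and $\Pi(\nu)(g)=\bar w(\bar g)=w(g)$ otherwise, i.e. $\Pi(\nu)=w$; moreover $C(\nu,Y)=\{w>0\}=r_Y(w)\subset E$, so $\nu\in\mathfrak{Y}(E)$.

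The main obstacle is \emph{continuity}, which I expect to be the only genuinely technical point, the more so as the Riemann--Zariski topology is not first countable and $\Gamma$ may have high rank. I would reduce to a subbasis of $]Y[$ consisting of the sets $\{w:w(g)<p/q\}$ and $\{w:w(g)>p/q\}$ for $g\in A$ and $p/q\in\Q_{>0}$, and rewrite each condition $\psi_\nu(\nu(g))\lessgtr p/q$ as the comparison $q\,\nu(g)\lessgtr p\,\nu(f)$ in $\Gamma$ (modulo the convex subgroup $H$). Using Lemma~\ref{lem:blw} to pass to a normal model $X'$ on which the ideal $(g^q,f^p)\cdot\cO_{X'}$ is invertible, and Lemma~\ref{lem:val-ideal} to read off which generator realizes the value at the center, such a comparison becomes the Zariski-open condition that $C(\nu,X')$ avoid a suitable divisor $D'$, that is, membership in a basic open $\cU(X',D')$ of $\fX$. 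Matching these model-theoretic open sets with the subbasic opens of $]Y[$ then yields openness of the preimages and hence continuity of $\Pi$; the careful bookkeeping between the archimedean normalization at $f$ and the Zariski data on models is where the work lies.
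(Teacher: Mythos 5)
Your construction of $\Pi$ is, in substance, the same map as the paper's: the paper defines $\Pi(\nu)(g)=-\log\sup\{p/q\in\Q_+:\ g^q/f_j^p\in R_\nu\}$ on a chart, and that supremum is precisely your H\"older collapse $\iota(\nu(g)\bmod H)$, with the conventions $\pm\infty$ outside $H'$; your structural verification of multiplicativity and the paper's hands-on computation with $R_\nu$-membership prove the same thing, and your patching remark (the comparison unit has $\nu$-value $0$) is if anything more careful than the paper's. The center computation, the divisorial case ($H=0$, $H'=\Gamma$, equality of centers), and the divisorial preimage ($\nu:=w$ itself) are all correct and match the paper. Where you genuinely diverge is surjectivity: the paper deduces it softly from the density of rank-one norms in $]Y[$ together with continuity (and, implicitly, quasi-compactness of $\mathfrak{Y}(E)$, so that the image is compact, hence closed in the Hausdorff space $]Y[$), whereas you build an explicit preimage of an arbitrary $w$ as a composite $\mu\ast\bar w$ with $\mu$ chosen to have residue field $k(Z)$, $Z=V(\{w=+\infty\})$. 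Your route is constructive, works pointwise, and does not need the density claim nor continuity; the computation $\Pi(\mu\ast\bar w)=w$ and $C(\nu,Y)=r_Y(w)\subset E$ checks out (only note that $\Gamma_{\bar w}$ is an archimedean subgroup of $\R$, not $\R$ itself).

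The one step you did not actually prove is continuity, and there you both leave the gap and overestimate the difficulty. As your parenthesis ``modulo the convex subgroup $H$'' hints, $\psi_\nu(\nu(g))\lessgtr p/q$ is \emph{not} equivalent to $q\nu(g)\lessgtr p\,\nu(f)$: they differ when $q\nu(g)-p\nu(f)$ is a nonzero element of $H$, so one only has sandwich inclusions and must insert a strict rational gap and take unions over rationals. Once this is done, no models are needed at all --- in particular Lemma~\ref{lem:blw} and Lemma~\ref{lem:val-ideal} are superfluous here (the latter concerns the valuation ideals $I(\nu,\gamma)$ of a fixed $\nu$ and does not directly govern the principalization of $(g^q,f^p)$; the fact your plan really uses is the elementary one that at each point one of the two generators generates the invertible ideal). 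Indeed $\{\nu:\ h\in R_\nu\}=U(k[h])$ is already a basic open set of $\fX$ (this is the content of Lemma~\ref{lem:cvg}), and on each chart
$$\Pi^{-1}\{w:\ w(g)>p/q\}=\bigcup_{p'/q'>p/q}\{\nu:\ g^{q'}/f_j^{p'}\in R_\nu\},\qquad \Pi^{-1}\{w:\ w(g)<p/q\}=\bigcup_{p'/q'<p/q}\{\nu:\ f_j^{p'}/g^{q'}\in R_\nu\},$$
which are open; since these sets form a subbasis of $]Y[$, continuity follows in a few lines. This is exactly the paper's argument, phrased there with nets. So: everything you carried out is correct, surjectivity is proved by a genuinely different (and self-contained) method, and the missing continuity step, while completable along your sketched route, admits a much lighter proof than you anticipated.
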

This result is well-known but we give a proof for sake of completeness.

\begin{proof}
To construct  $\Pi$,  pick a finite collection of  affine charts $Y_j\subset Y$, each intersecting $E$, and such that
$\cup_j Y_j \supset E$. For each $j$, pick an equation $f_j\in k[Y_j]$ of $E$. Recall that 
$]Y_j[$ is  the set of all 
multiplicative semi-norms $|\cdot|: k[Y_j]\to \mathbb{R}_+$ such that $- \log|f_j| = +1$.
Then $]Y[$ is  the disjoint union of the $]Y_j[$'s patched together is a natural way.
For any valuation $\nu\in\mathfrak{Y}(E)$ with valuation ring $R_\nu$, 
we take $j$ such that the center of $\nu$ in $Y_j$ is non-empty, and 
define  the function $\Pi(\nu)\equiv|\cdot|_\nu: k[Y_j]\to \mathbb{R}_+$ by setting
$$\Pi(\nu)(f) = - \log \left(
\sup\{ p/q\in\mathbb{Q}_+, \text{ such that }  \, f^q/f_j^p \in R_\nu, \, p \in \N, \, q \in \N^*\} \right)$$ for any $f\in k[Y_j]$. We claim this is a multiplicative semi-norm. To simplify notation, we shall
 work with  $\mu(f) = \exp ( - \Pi(\nu)(f))$. We shall use repeteadly the fact that $g \in k(Y)$ belongs to the valuation ring $R_\nu$ iff $g^n \in R_\nu$ for some $n\in \N^*$.

Fix $f \in k[Y_j]$, and let $I(f) = \{ p/q \in \mathbb{Q}_+, \text{ such that }
\, f^q/f_j^p \in R_\nu\}$. Then $I(f)$ is a segment containing $0$.
Indeed pick $p/q > p'/q' \in I(f)$.
Then $ (f^q/f_j^p)^{q'} = (f^{q'}/f_j^{p'})^q \times f_j^{qp'-pq'} \in R_\nu$, 
and  $ (f^q/f_j^p)\in R_\nu$.

Next assume $f^q/f_j^p, g^q/f_j^p \in R_\nu$. The same argument as before shows that  for all $0\le i\le q$
we have $(f^{q-i}g^i)/f_j^p \in R_\nu$.
Whence $(f+g)^q/f_j^p \in R_\nu$. This shows $\mu(f+g) \ge \min \{ \mu(f), \mu(g) \}$.

Finally if $f^q/f_j^p, g^{q'}/f_j^{p'} \in R_\nu$, then $(fg)^{qq'}/f_j^{pq'+p'q}\in R_\nu$ so that 
 $\mu(fg) \ge \mu(f) + \mu(g)$. Conversely, if $p_0/q_0 > \mu(f) + \mu(g)$ then we may find two rational numbers 
 $p/q>\mu(f)$,  $p'/q'>\mu(g)$ such that $p_0/q_0 = p/q + p'/q'$. Since  $f_j^p/f^q, f_j^{p'}/g^{q'} \in R_\nu$, we get $f_j^{p_0}/(fg)^{q_0}\in R_\nu$. This proves  $\mu(fg) =\mu(f) + \mu(g)$, and concludes the proof that $\Pi(\nu)$ is
 a multiplicative semi-norm.

\smallskip

To see the continuity of $\Pi$ we pick a (net) $\nu_n$ converging to a valuation $\nu$ in the Riemann-Zariski space, and we pick 
$f\in k[Y_j]$. Take $p/q \in \Q_+$ such that $f^q/f_j^p \in R_\nu$, i.e. $\Pi(\nu)(f) \le -\log ( p/q)$. Then for an index $n$ large enough $f^q/f_j^p \in R_{\nu_n}$ too, hence
$\varlimsup_n \Pi(\nu_n)(f)\le \Pi(\nu)(f)$. Conversely, suppose $f_j^p/f^q \in R_\nu$ (i.e. $\Pi(\nu)(f) 
\ge -\log (p/q)$). The same argument shows
$\varliminf_n \Pi(\nu_n)(f)\le \Pi(\nu)(f)$. 

\smallskip

Let $Z\subset E$ be the center of a valuation $\nu \in \mathfrak{Y}(E)$, and suppose $Z\cap Y_j \neq\O$.
Then $r_Y(\Pi(\nu))$ is described by the prime ideal of functions $f \in k[Y_j]$ such that 
$\Pi(\nu)(f) <1$, or in an equivalent way such that $f^q/f_j^p \in R_\nu$ for some $p/q>0$.
But $f_j\in\fm_{R_{\nu_j}}$, hence $f$ too, and $r_Y(\Pi(\nu))\supset Z$.

\smallskip

By construction, any rank $1$ valuation $\nu\in\mathfrak{Y}(E)$ is mapped to the unique norm $|\cdot|$ on $k(Y)$ such that $R_\nu = \{ f, \, \log|f| \le 1 \}$ and $ - \log |f_j | = +1$ in some chart. Since norms are dense in $]Y[$, and $\Pi$ is continuous, we infer that $\Pi$ is surjective.
We complete the proof by noting that $\Pi$ maps divisorial valuations to divisorial points.
\end{proof}

We also collect the following result for later reference
\begin{Lem}\label{lem:compact}
Pick any projective normal variety $Y$, and any effective Cartier divisor $E$ in $Y$ whose support is {\it complete}. Then $\mathfrak{Y}(E)$ is quasi-compact, and $]E[$ is compact.
\end{Lem}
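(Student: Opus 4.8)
The plan is to realize $\mathfrak{Y}(E)$ as a \emph{closed} subset of the full Riemann--Zariski space $\mathfrak{Y}$, apply Zariski's quasi-compactness theorem, and then transport quasi-compactness down to $]E[$ through the projection $\Pi$ of Proposition~\ref{lem:projec}.

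First I would exploit that $Y$ is projective, hence proper over $k$: by the valuative criterion of properness every valuation ring of $k(Y)$ containing $k$ has a non-empty center in $Y$ (this is exactly where the completeness hypothesis enters, and it also forces the support of the divisor $E$ to be complete). Since $E$ is a genuine Cartier, hence Weil, divisor, the set $\cU(Y,E)$ of valuations whose center in $Y$ is \emph{not} included in $E$ is one of the basic open sets of the topology on $\mathfrak{Y}$ described earlier. Because no valuation has empty center, the complement of $\cU(Y,E)$ in $\mathfrak{Y}$ is precisely $\mathfrak{Y}(E)$, so $\mathfrak{Y}(E)$ is closed. By Theorem~\ref{thm:RZ-compact} the space $\mathfrak{Y}$ is quasi-compact, and a closed subset of a quasi-compact space is quasi-compact; this gives the first assertion.

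For the second assertion I would invoke the surjective continuous map $\Pi:\mathfrak{Y}(E)\to\,]E[$ furnished by Proposition~\ref{lem:projec} (here $]E[$ is the normalized generic fiber $]Y[$ of $Y$ along $E$). The continuous image of a quasi-compact space is quasi-compact, so $]E[$ is quasi-compact. To upgrade this to compactness it remains to observe that $]E[$ is Hausdorff: it is built, chart by chart, out of sets of multiplicative semi-norms on the rings $k[Y_j]$ endowed with the topology of pointwise evaluation, so that each $]Y_j[$ embeds into a product of copies of $\R_+$, a Hausdorff space, and the resulting Berkovich-type space $]E[$ is separated. A quasi-compact Hausdorff space is compact, which concludes the argument.

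The only genuinely non-formal point — and hence the main obstacle — is the clean identification of the complement of $\cU(Y,E)$ with $\mathfrak{Y}(E)$: one must rule out valuations with empty center in $Y$, since such a valuation would lie in neither $\cU(Y,E)$ nor $\mathfrak{Y}(E)$ and closedness would then break down. This is settled entirely by properness of $Y$. Everything else reduces to the standard facts that a closed subset of a quasi-compact space is quasi-compact and that a continuous image of a quasi-compact space is quasi-compact, combined with the already established continuity and surjectivity of $\Pi$.
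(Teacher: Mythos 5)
Your proof is correct and takes essentially the same route as the paper: the paper likewise deduces quasi-compactness of $\mathfrak{Y}(E)$ from Zariski's theorem (Theorem~\ref{thm:RZ-compact}) together with the observation that $\mathfrak{Y}(E)$ is closed in $\mathfrak{Y}$, and your passage to $]E[$ via the continuous surjection $\Pi$ of Proposition~\ref{lem:projec} plus Hausdorffness of the space of semi-norms is exactly the step the paper leaves implicit in ``the result follows.'' The details you add --- non-emptiness of centers by the valuative criterion of properness, so that $\mathfrak{Y}(E)$ is precisely the complement of the basic open set $\cU(Y,E)$ --- are accurate fillings-in of the paper's terse argument.
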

\begin{proof}
It is a theorem of Zariski~\cite{zariski-samuel} that $\mathfrak{Y}$ is quasi-compact. The result follows since $\mathfrak{Y}(E)$ is a closed subset of $\mathfrak{Y}$.
\end{proof}


\subsection{Proof of Theorem~\ref{thm:dense}}

Recall our assumption: $X$ is a normal algebraic variety, $D$
is an effective Cartier divisor in $X$, and $A$ is any subset of $]X[$.

We first cover $X$ by finitely many affine open sets $X_i$, and write $D_i = D \cap X_i$.
We let $]X_i[$ be the normalized generic fiber of the formal completion of $X_i$ along $D_i$.
This is an open (dense) subset of $]X[$, and $\cup_i \, ]X_i[\,  = \,]X[\,$.

For each $i$, we fix a projective (normal) compactification $X_i \subset \bar{X}_i$. We write $\bar{D}_i$ for the closure of $D$ in $\bar{X}_i$. It is a priori only a Weil divisor, but we may suppose it is Cartier by  taking the normalized blow up associated to the ideal defining $\bar{D}_i$.

Now pick any point $x$ in the closure of $A$ in $]X[$. It belongs to $]X_i[$ for some $i$.
Since $]X_i[$ is open in $]X[$, the point $x$ also lie in the closure of $A_i =  A\cap \,]X_i[\,$.
Apply Proposition~\ref{lem:projec} to $Y = \bar{X}_i$, and $E = \bar{D}_i$. This yields a continuous and surjective map $\Pi: \mathfrak{Y}(E) \to\, ]\bar{X}_i[\,\supset \,]X_i[\,$. Pick any valuation $\nu\in \Pi^{-1}(x) \subset \mathfrak{Y}(E)$. 
Since $\Pi$ is continuous, $\Pi^{-1}(x)$ is included in the closure of $\Pi^{-1}(A_i)$.
Theorem~\ref{thm:RZ-dense} implies the existence of a sequence of valuations $\nu_n \in \Pi^{-1}(A_i)$ converging to $\nu$ so that  $\Pi(\nu_n)\to x$.


\subsection{Proofs of Corollaries~A and~B}

As in the previous section, $X$ is a normal algebraic variety, and $D$ is an effective Cartier divisor.

\medskip

We first prove Corollary~A. 
Pick any subset $A$ of $]X[$. First take $x$ in the closure of $A$.
By Theorem~A, there exists a sequence $x_n \in A$ such that $x_n \to x$.
Now suppose $A$ is relatively $\omega$-compact. We need to show that it is relatively 
compact in $]X[$. Consider $\bar{X}$ any complete algebraic variety that contains $X$ as a Zariski dense subset and such that the closure $\bar{D}$ of $D$ in $\bar{X}$ is still Cartier. Such a space is given by Nagata's theorem, see~\cite{conrad} for a modern account.

Let $A'$ be the closure of $A$ in $]\bar{X}[$. By Lemma~\ref{lem:compact}, $]\bar{X}[$ hence $A'$ are compact.  Now suppose by contradiction that we can find 
$x \in A' \setminus \,]X[$.  Theorem~A applied to $A$ in $\bar{X}$ implies the existence of a sequence $x_n \in A$ such that $x_n \to x$. Since $A$ is relatively $\omega$-compact, $x_n$ admits a cluster point in $]X[$ which is absurd. Thus $A'$ is included in $]X[$ which proves that $A$ is relatively compact.

These concludes the proof of Corollary~A.

\medskip

We now prove Corollary~B. Pick any $x \in X_\eta$.
Note that for any closed subset $C$ of $D$ the preimage $r_X^{-1}(C)$ is
open. In particular, $r_X^{-1} ( \overline{ r_X(x)})$ is an open neighborhood of $x$ in $X_\eta$.
We claim that divisorial norms are dense in $X_\eta$ (hence in $]X[ \, \cap r_X^{-1} ( \overline{ r_X(x)})$). We conclude by applying Theorem~A.

To justify our claim, we proceed as follows.
Just as in the proof of Theorem~A, we cover $X$ by affine charts $X_i$
and take projective compactifications $\bar{X}_i$ of $X_i$. 
The set of divisorial valuations on $k(\bar{X}_i)$ that are centered in $\bar{D}_i$
is dense in the subset of the  Riemann-Zariski space of $\bar{X}_i$
of valuations centered in $\bar{D}_i$. By Proposition~\ref{lem:projec}, this shows
divisorial norms are dense in $]\bar{X}_i[$. Since $]X_i[$ is open in $]\bar{X}_i[$,
divisorial norms are also dense in $]X_i[$, hence in $]X[$ as required.


\subsection{Proofs of Corollary~C}

Since a compact analytic space over $k((T))$ is covered by finitely many affinoids, and any affinoid is a closed
subset in a ball of a suitable dimension,  it is sufficient to treat the case
of the unit  ball in $\A^n_{k((T))}$.

Let $D$ is the hyperplane $\{x_1=0\}$  in the affine space 
$(x_1, ..., x_{n+1}) \in X:= \A_{k}^{n+1}$. The normalized
generic fiber $]X[$ of the formal completion of $X$ along $D$
is the set of multiplicative semi-norms $|\cdot|: k[x_1, ..., x_{n+1}] \to \R_+$ 
trivial on $k$, and  such that  $|x_1| = e^{-1}$ and $|x_i| \le 1$ for all $i\ge 2$.  
This is precisely the unit ball in $X = \A_{k((T))}^{n}$. 
By Theorem~A, the unit ball in any dimension is thus angelic.


%
%

\end{document}